\numberwithin{equation}{section}
\numberwithin{figure}{section}
  \theoremstyle{plain}
  \newtheorem*{thm*}{\protect\theoremname}
  \theoremstyle{plain}
  \newtheorem*{cor*}{\protect\corollaryname}
\theoremstyle{plain}
\newtheorem{thm}{\protect\theoremname}[section]
  \theoremstyle{plain}
  \newtheorem{prop}[thm]{\protect\propositionname}
  \theoremstyle{plain}
  \newtheorem{lem}[thm]{\protect\lemmaname}
  \theoremstyle{plain}
  \newtheorem{cor}[thm]{\protect\corollaryname}
  \providecommand{\corollaryname}{Corollary}
  \providecommand{\lemmaname}{Lemma}
  \providecommand{\propositionname}{Proposition}
  \providecommand{\theoremname}{Theorem}
\providecommand{\theoremname}{Theorem}
\begin{document}

\title{A Common Axiomatic Basis for Projective Geometry and Order Geometry}

\author{Wolfram Retter}

\email{math.wolframretter(at)t-online.de}

\date{\textbf{\Large August 9, 2015}}
\begin{abstract}
A natural one-to-one correspondence between projective spaces, defined
by an axiom system published by O. Veblen and J. W. Young in 1908,
and projective join spaces, defined by an axiom system published by
M. Pieri in 1899, is presented. A projecitivity criterion for join
spaces is proved that amounts to replacing one of Pieri's projective
geometry axioms by an axiom published by G. Peano in 1889 as part
of an axiom system for order geometry. Thus, projective geometry and
order geometry have a broad common axiomatic basis. As a corollary,
it is shown how the concept of a projective join space can be derived
from the concept of a matroid. The defining properties of an equivalence
relation are used as a conceptual red thread.
\end{abstract}

\subjclass[2010]{51A05, 51D20}

\keywords{projective geometry, order geometry, line space, projective space,
projective join space, interval space, equivalence relation, matroid.}

\maketitle
\tableofcontents{}

\section{Summary}

The main results are summarized. For definitions and notation, see
the sections below.

The following theorem establishes a natural one-to-one correspondence
between the set-represented line structures and the equivalence-relational
join relations on a set. Projective spaces, defined by axioms in \cite[§1]{veblen_young_1908},
correspond to projective join spaces, defined by axioms in \cite[§1]{pieri_1899}.
\begin{thm*}
\ref{sub:correspondence_between_set_represented_line_spaces_and_equivalence_relational_join_spaces}
(correspondence between set-represented line spaces and equivalence-relational
join spaces) Let $X$ be a set.
\begin{enumerate}
\item For a set-represented line structure $L=\left(Y,\,\in\right)$ on
$X\,,$ the ternary relation $\iota\left(L\right)=\left\langle \cdot,\,\cdot,\,\cdot\right\rangle _{L}$
on $X$ defined by $\left\langle a,\, b,\, c\right\rangle _{L}:\Longleftrightarrow\left(\left(a\neq c\right)\mbox{ and }b\in\overleftrightarrow{ac}\right)$
or $\left(a=c\mbox{ and }b\in\left\{ a\right\} \right)$ is an equivalence-relational
join relation on $X\,.$
\item Vice versa, for an equivalence-relational join relation $\left\langle \cdot,\,\cdot,\,\cdot\right\rangle $
on $X\,,$ the pair\\
$\lambda\left(\left\langle \cdot,\,\cdot,\,\cdot\right\rangle \right):=\left(Y,\,\in\right)$
with $Y:=\left\{ ab|a,\, b\in X\,,\, a\neq b\right\} $ and $\in$
denoting set membership as usual is a set-represented line structure
on $X\,.$
\item $\left(\iota,\,\lambda\right)$ is an inverse pair of one-to-one correspondences
between the set-represented line structures on $X$ and the equivalence-relational
join relations on $X\,.$
\item Set-represented projective line structures on $X$ correspond to projective
join relations on $X\,.$
\end{enumerate}
\end{thm*}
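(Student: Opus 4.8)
The plan is to establish (1)--(3) by direct verification against the relevant axiom lists, using the single translation dictionary $b\in\overleftrightarrow{ac}\Leftrightarrow\langle a,b,c\rangle$ (for $a\neq c$, with the diagonal case $a=c$ packaged into the singleton clause), and then to obtain (4) from (3) by matching the extra axioms on the two sides. For a join relation I write $\overleftrightarrow{ac}:=\{x\in X:\langle a,x,c\rangle\}$ when $a\neq c$, so that $\iota$ and $\lambda$ become literally inverse reformulations of the same datum: ``the line through $a$ and $c$, as a subset of $X$'' versus ``the ternary predicate that $b$ lies on line $ac$''.

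For (1) I would check clause by clause that the displayed formula for $\langle\cdot,\cdot,\cdot\rangle_L$ satisfies each defining axiom of an equivalence-relational join relation. The reflexivity-type clauses --- that $a$ and $c$ lie on $\overleftrightarrow{ac}$, and that $\langle a,b,a\rangle_L$ forces $b=a$ --- are immediate from the meaning of ``the line through two points'' and from the $a=c$ clause; symmetry, $\langle a,b,c\rangle_L\Leftrightarrow\langle c,b,a\rangle_L$, is just $\overleftrightarrow{ac}=\overleftrightarrow{ca}$; the transitivity-type axioms are where the line-structure axioms actually get used, via the schema: if $b,d\in\overleftrightarrow{ac}$ and $b\neq a$, then uniqueness of the line through $a$ and $b$ gives $\overleftrightarrow{ab}=\overleftrightarrow{ac}$ and the required membership drops out, with degenerate subcases absorbed by the $a=c$ clause. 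For (2) I would put $Y:=\{\overleftrightarrow{ab}:a,b\in X,\ a\neq b\}$ and verify the line-structure axioms for $(Y,\in)$: every member contains at least two points and any two distinct points lie in a common member, because $a,b\in\overleftrightarrow{ab}$ by the reflexivity axioms; the one substantive point, uniqueness of the line through two distinct points, follows from symmetry plus the transitivity-type axioms, which yield $\overleftrightarrow{cd}=\overleftrightarrow{ab}$ whenever $c\neq d$ and $c,d\in\overleftrightarrow{ab}$.

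For (3) I would compute the two composites. The line set of $\lambda(\iota(L))$ is $\{\{x:x\in\overleftrightarrow{ac}\}:a\neq c\}=\{\overleftrightarrow{ac}:a\neq c\}$, which is the line set of $L$ because in a line structure every line carries two distinct points and is determined by any two of them; hence $\lambda\circ\iota=\mathrm{id}$. Conversely $\iota(\lambda(\langle\cdot\rangle))$ is, for $a\neq c$, the predicate $b\in\overleftrightarrow{ac}$, i.e.\ $\langle a,b,c\rangle$, and for $a=c$ the predicate $b=a$, which equals $\langle a,b,a\rangle$ by the join-relation axiom $\langle a,b,a\rangle\Leftrightarrow b=a$; hence $\iota\circ\lambda=\mathrm{id}$. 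So $\iota$ and $\lambda$ are a mutually inverse pair of bijections.

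Item (4) is the real content and I expect it to be the main obstacle. Since (3) already gives a bijection, it suffices to show that, for a line structure $L$ and its join relation $\langle\cdot\rangle_L$, the axioms that upgrade $L$ to a Veblen--Young projective space are equivalent to the axioms that upgrade $\langle\cdot\rangle_L$ to a Pieri projective join relation. I would push each such axiom through the dictionary $b\in\overleftrightarrow{ac}\Leftrightarrow\langle a,b,c\rangle$: the richness conditions (enough non-collinear points, lines with at least three points) go across term for term, while the Veblen--Young incidence axiom --- the one beyond the basic line axioms, forcing the appropriate lines in a common plane to meet --- becomes, after rewriting ``lies on'' and ``two lines intersect'' purely in terms of the ternary relation, precisely Pieri's configuration axiom. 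The delicate step is the bookkeeping of degenerate cases: coincidences among the configuration points, and collinear position of the ``triangle'', are excluded in slightly different packagings by the subset formulation and by the ternary formulation, so one must verify that the Veblen--Young and Pieri non-degeneracy hypotheses actually correspond. Once that is done, (3) shows that $\iota$ restricts to a bijection between the set-represented projective line structures and the projective join relations on $X$.
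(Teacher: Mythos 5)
Your treatment of (1)--(3) is essentially the paper's own proof: (1) is the direct verification the paper carries out in the proposition ``join space associated with a line space''; (2) rests on exactly the uniqueness statement that $c,d\in ab$ and $c\neq d$ imply $ab=cd$, which is the paper's equivalence-relationality criterion; and (3) is the same computation of the two composites. The only soft spot there is that this uniqueness statement is asserted (``follows from symmetry plus the transitivity-type axioms'') rather than derived; it does need the short two-case argument (the cases $a\neq c$ and $a=c$) that the paper gives, but that is a routine omission in a sketch.

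The genuine gap is in (4), which you call ``the real content'' and then do not prove. In the paper, (4) costs nothing: projectivity of a (set-represented) line structure is \emph{defined} as projectivity of its associated join relation, i.e.\ density together with the four-point preprojectivity condition ``if $b=c$ or $a=d$ or $bc\cap ad\neq\emptyset$, then $a=b$ or $c=d$ or $ab\cap cd\neq\emptyset$''; under the identification in (3) the joins of distinct points are literally the lines, so the defining conditions on the two sides are the same sentences, and the paper's proof of (4) is the single line ``entailed by the definition of projectivity of a line space as projectivity of the associated join space''. You instead set out to prove a substantive equivalence between Pieri's Postulato XII and the original Veblen--Young triangle axiom A3 pushed through the dictionary, and you explicitly identify the bookkeeping of degenerate and collinear configurations as ``the delicate step \ldots one must verify'' --- and then leave it unverified. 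So the one step you single out as the main obstacle is only promised, not done. Note also that the dimension-type richness axioms (``enough non-collinear points'', Pieri's Postulati II, III, XI and their Veblen--Young counterparts) are deliberately excluded from the definitions on both sides in this setting, so they should not enter the matching at all; only density (at least three points per line) and the four-point condition are in play. If you intend the stronger claim your plan targets --- equivalence with Veblen--Young's axiom in its original triangle form --- the degenerate-case analysis is precisely where the work lies and cannot be waved through; for the theorem as stated, it is not needed.
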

\noindent The following theorem characterizes preprojective join spaces.
The equivalence (\ref{enu:join_equivalence_relationality_criterion_5})
$\Leftrightarrow$ (\ref{enu:join_equivalence_relationality_criterion_3})
amounts to replacing the projective geometry axiom Postulato XII in
\cite[§1]{pieri_1899} by Assioma XIII in \cite[§10]{peano_1889},
where it is part of an axiom system for order geometry.
\begin{thm*}
\ref{sub:join_equivalence_relationality_criterion} (join-equivalence-relationality
criterion) Let $X$ be an equivalence-relational join space. The following
conditions are equivalent:
\begin{enumerate}
\item $X$ is join-equivalence-relational.
\item $X$ is join-transitive.
\item For all $a,\, b,\, c\in X\,,$ $a\left(bc\right)\subseteq\left(ab\right)c\,.$
\item For all $a,\, b,\, c,\, d\in X\,,$ if $\left(c,\, b,\, a,\, d\right)$
is dependent, then $\left(a,\, b,\, c,\, d\right)$ is dependent. 
\item $X$ is preprojective.
\end{enumerate}
\end{thm*}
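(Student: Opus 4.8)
The natural strategy is to prove the five statements pairwise equivalent through a single cycle of implications, arranging the cycle so that each step stays as close as possible to a direct unwinding of the definitions introduced in the earlier sections; tentatively I would run $(1)\Rightarrow(2)\Rightarrow(3)\Rightarrow(4)\Rightarrow(5)\Rightarrow(1)$, reserving the right to reroute if some reverse arrow turns out to be shorter. The organizing observation is that the standing hypothesis ``$X$ is an equivalence-relational join space'' already supplies the reflexivity- and symmetry-type clauses for the join relation, so in every one of the five conditions the only thing genuinely at issue is a form of transitivity, expressed in a different dialect each time.

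The two operational steps $(1)\Rightarrow(2)$ and $(2)\Rightarrow(3)$ I expect to be routine. Being join-equivalence-relational should by definition bundle together the reflexivity-, symmetry- and transitivity-type clauses, of which join-transitivity is exactly the transitivity clause, so $(1)\Rightarrow(2)$ is by inspection; and $(2)\Rightarrow(3)$ should be, if not the very definition of join-transitivity, then a short pointwise argument: a point $p\in a(bc)$ lies on $ax$ for some $x\in bc$, and after disposing of the degenerate coincidences among $a,b,c$ and the case $b=c$ (each reducing to a join-space axiom or to reflexivity), join-transitivity lets one reassociate the witness $x$ so as to place $p$ in $(ab)c$, using commutativity of the binary join where needed.

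The remaining three steps carry the geometric content, and among them $(3)\Rightarrow(4)$ is again mostly bookkeeping: I would unwind ``$(c,b,a,d)$ is dependent'' to the statement that $d$ lies in an iterated join of $c,b,a$, say $d\in(cb)a$, and then use the inclusion in $(3)$ together with commutativity of the binary join to permute the factors into the order $a,b,c$, obtaining $d\in(ab)c$, which is ``$(a,b,c,d)$ is dependent'' (the only care needed is to check that $(3)$ plus commutativity really generates the particular transposition of factors required, and to track the degenerate cases). For $(4)\Rightarrow(5)$ and $(5)\Rightarrow(1)$ I would lean on the remark in the introduction that $(3)$ is a reformulation of Peano's Assioma XIII while ``preprojective'' is the property governed by Pieri's Postulato XII: so $(4)\Rightarrow(5)$ asks me to derive the incidence conclusion defining a preprojective join space --- the two lines that ``ought'' to intersect actually do --- from the dependence-permutation property $(4)$, which I would do by taking the point configuration occurring in the definition of ``preprojective'', observing that one ordering of it is dependent, transporting this dependence along $(4)$ to the ordering that exhibits the wanted intersection, and reading off the common point; and $(5)\Rightarrow(1)$ closes the loop by recovering from preprojectivity whatever transitivity clause $(1)$ adds to the standing hypothesis, once more by a configuration-chase inside a single plane.

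I expect the genuine obstacle to be exactly this last cluster --- faithfully translating transitivity back and forth among its three incarnations here, the operational inclusion $a(bc)\subseteq(ab)c$, the combinatorial dependence condition on quadruples, and the geometric preprojective-incidence condition, all while keeping strict account of commutativity of the join and of the numerous degenerate coincidences. The cyclic arrangement is chosen precisely so that each of these translations has to be performed in one direction only.
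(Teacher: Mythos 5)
Your cyclic plan $(1)\Rightarrow(2)\Rightarrow(3)\Rightarrow(4)\Rightarrow(5)\Rightarrow(1)$ differs structurally from the paper, which proves the adjacent pairs as biconditionals: $(1)\Leftrightarrow(2)$ by definition, $(2)\Leftrightarrow(3)$ by citing the join-transitivity criterion (Theorem \ref{sub:join_transitivity_criterion}, from an earlier paper), $(3)\Leftrightarrow(4)$ by the rejoinability criterion \ref{sub:rejoinability_criterion}, and $(4)\Leftrightarrow(5)$ by the quadruple dependence criterion \ref{sub:quadruple_dependence_criterion}. Your steps $(1)\Rightarrow(2)$, $(2)\Rightarrow(3)$, $(3)\Rightarrow(4)$, $(4)\Rightarrow(5)$ are workable as sketched, with the caveat that the non-degenerate disjuncts are not mere bookkeeping: in $(3)\Rightarrow(4)$ the case $a\in cb$ needs $b$-symmetry, and in $(4)\Rightarrow(5)$ converting $bc\cap ad\neq\emptyset$ into dependence of $(c,b,a,d)$ and then reading an intersection point off $d\in(ab)c$ each need a symmetry step (this is exactly what Propositions \ref{sub:join_spaces_symmetric_with_respect_to_a_base_set} and \ref{sub:quadruple_dependence_criterion} supply); these uses of the standing hypothesis should be made explicit rather than filed under ``degenerate cases.''

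The genuine gap is the closing leg $(5)\Rightarrow(1)$, for which you offer no argument beyond ``a configuration-chase inside a single plane.'' Because you arranged the cycle one-directionally, this single step must carry all three reverse implications you deferred: from preprojectivity back to the dependence condition, from that back to $a(bc)\subseteq(ab)c$, and --- the real content --- from that pointwise inclusion to $ab$-transitivity, i.e.\ transitivity of $\left\langle ab,\,\cdot,\,\cdot\right\rangle$ with respect to the entire set $ab$. The last passage is not a formal reassociation: given $x\in u(vp)$ with $u,v\in ab$, condition $(3)$ only yields $x\in(uv)p$, and one still has to know $uv\subseteq ab$. Under the standing hypothesis this follows from the equivalence-relationality criterion \ref{sub:equivalence_relationality_criterion} (any two distinct points $c,d\in ab$ satisfy $cd=ab$), which is precisely what makes $ab$ behave like a line; alternatively one can invoke the cited Theorem \ref{sub:join_transitivity_criterion}. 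Neither ingredient appears in your sketch, and without one of them the step from condition $(3)$ (or from Pieri's incidence condition) to join-transitivity does not go through. So the claimed economy of ``each translation in one direction only'' is illusory --- the difficulty has simply been concentrated in the one step left unproved --- and to complete the proof you must either supply this argument for $(5)\Rightarrow(1)$ or revert to proving the reverse arrows individually as the paper does.
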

\begin{cor*}
\ref{sub:projectivity_criterion} (projectivity criterion) A join
space is projective iff it is dense and join-equivalence-relational.
\end{cor*}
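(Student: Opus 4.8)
The plan is to peel the corollary off the join-equivalence-relationality criterion (Theorem~\ref{sub:join_equivalence_relationality_criterion}). That theorem already tells us that, for an equivalence-relational join space, being join-equivalence-relational is the same as being preprojective; since a projective join space is in particular an equivalence-relational join space, and since the hypothesis of the converse likewise supplies an equivalence-relational join space, the theorem is applicable in both directions. Hence the whole content of the corollary beyond Theorem~\ref{sub:join_equivalence_relationality_criterion} is the identification of ``projective'' with ``dense and preprojective'', and I would first make this reduction explicit: the corollary is equivalent to the assertion that a join space is projective if and only if it is dense and preprojective.

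For the forward implication I would start from the definition of a projective join space as a join space satisfying all the axioms of \cite{pieri_1899}: such a space is an equivalence-relational join space, it is dense (directly, or as a consequence of Postulato~XII together with the remaining axioms), and it is preprojective, since Postulato~XII, modulo the other axioms, yields the preprojective axiom. Applying Theorem~\ref{sub:join_equivalence_relationality_criterion}, (5)$\Rightarrow$(1), then gives join-equivalence-relationality, so $X$ is dense and join-equivalence-relational. For the converse I would assume $X$ dense and join-equivalence-relational, read off from Theorem~\ref{sub:join_equivalence_relationality_criterion} that $X$ is preprojective and that $a(bc)\subseteq(ab)c$ and the dependence condition~(4) hold for all points, and then reconstruct Pieri's Postulato~XII: take the configuration to which Postulato~XII refers, use density to supply the interior point whose existence it asserts, and verify the required incidences by feeding the data through condition~(3) or~(4). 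Together with density this exhausts the axioms of \cite{pieri_1899}, so $X$ is projective.

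The one genuinely substantive point, and the step I expect to be the main obstacle, is this matching of Pieri's Postulato~XII against ``preprojective plus dense'': Theorem~\ref{sub:join_equivalence_relationality_criterion} characterises preprojectivity by join-transitivity and by the Peano-style conditions~(3) and~(4), not verbatim by Postulato~XII, so one has to check that density is precisely the ingredient that upgrades a preprojective join space to a projective one. Depending on exactly where the density requirement sits inside Pieri's list, this is either essentially immediate or costs the short argument sketched above; the remainder of the proof is only the chaining of the already-established equivalences (projective $\Rightarrow$ preprojective $\Rightarrow$ join-equivalence-relational, and back again with density) together with a comparison of definitions.
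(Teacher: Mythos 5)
Your proposal is correct and follows essentially the same route as the paper, which gives no separate proof because the corollary is immediate: by definition a projective join space is one that is preprojective and dense, so the statement reduces to Theorem \ref{sub:join_equivalence_relationality_criterion}, (\ref{enu:join_equivalence_relationality_criterion_1}) $\Leftrightarrow$ (\ref{enu:join_equivalence_relationality_criterion_5}), applied after noting, as you do, that both hypotheses yield an equivalence-relational space (preprojectivity contains equivalence-relationality, and join-equivalence-relationality implies it since $aa=\left\{ a\right\} $). The obstacle you anticipate is vacuous: density is Pieri's Postulato VIII, a separate item in the paper's definition of projectivity, and is neither derivable from Postulato XII nor needed to ``reconstruct'' it, so the contingency argument about supplying interior points can be dropped and only the definitional unwinding plus the theorem remains.
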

The following corollary shows how the concept of a preprojective join
space can be derived from the concept of a matroid.
\begin{cor*}
\ref{sub:matroid_preprojectivity_criterion} (matroid preprojectivity
criterion) Let $X$ be a join space. $X$ is preprojective iff it
is join-transitive and the pair consisting of $X$ and the set of
join-closed sets is a matroid.
\end{cor*}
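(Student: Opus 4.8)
The plan is to push everything through the join-equivalence-relationality criterion together with the standard presentation of a matroid by its closure operator, the matroid hypothesis supplying exactly the symmetry that upgrades join-transitivity to full equivalence-relationality. First I would record the behaviour of join-closed sets: for a join space $X$ let $\mathcal{F}$ be the family of join-closed subsets; then $\mathcal{F}$ contains $X$ and is closed under arbitrary intersection, hence is a closure system with closure operator $\mathrm{cl}$, where $\mathrm{cl}(S)$ is the least join-closed set containing $S$. Iterating $S\mapsto S\cup\bigcup_{a,b\in S}ab$ realizes $\mathrm{cl}(S)$ as $\bigcup_{n\ge 0}S_{n}$ with $S_{0}=S$ and $S_{n+1}=S_{n}\cup\bigcup_{a,b\in S_{n}}ab$; an induction on $n$ shows that every point of $\mathrm{cl}(S)$ lies in $\mathrm{cl}(F)$ for some finite $F\subseteq S$, so $\mathrm{cl}$ is finitary, while $aa=\{a\}$ makes singletons join-closed. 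Thus $(X,\mathcal{F})$ is a matroid precisely when $\mathrm{cl}$ satisfies the Steinitz--MacLane exchange property: $y\in\mathrm{cl}(S\cup\{x\})\setminus\mathrm{cl}(S)$ implies $x\in\mathrm{cl}(S\cup\{y\})$.

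For the implication from preprojectivity, suppose $X$ is preprojective; then $X$ is in particular an equivalence-relational join space, so the criterion applies and gives that $X$ is join-transitive and that $a(bc)\subseteq(ab)c$ for all $a,b,c$. With commutativity $ab=ba$ the latter becomes $a(bc)=(ab)c$, so the join induces an associative commutative operation on subsets. The crucial step is the formula $\mathrm{cl}(W\cup\{x\})=\{x\}\cup\bigcup_{w\in W}xw$ for $W\in\mathcal{F}$: the right-hand side lies between $W\cup\{x\}$ and every join-closed set containing it, and is itself join-closed because, by associativity, commutativity and $xx=\{x\}$, a join of two of its members is contained in $x$ joined with a join of members of $W$. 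Granting this, let $y\in\mathrm{cl}(S\cup\{x\})\setminus\mathrm{cl}(S)$ and put $W:=\mathrm{cl}(S)$. Either $y=x$, and then $x\in\mathrm{cl}(S\cup\{y\})$ trivially, or $y\in xw$ for some $w\in W$, necessarily with $w\neq y$ and $x\neq w$; collinearity symmetry (part of equivalence-relationality) then gives $x\in yw\subseteq\{y\}\cup\bigcup_{v\in W}yv=\mathrm{cl}(W\cup\{y\})=\mathrm{cl}(S\cup\{y\})$. Hence the exchange property holds and $(X,\mathcal{F})$ is a matroid.

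For the converse, suppose $X$ is join-transitive and $(X,\mathcal{F})$ is a matroid, so $\mathrm{cl}$ satisfies exchange. By the criterion it suffices to show that $X$ is an equivalence-relational join space; reflexivity $aa=\{a\}$ and symmetry $ab=ba$ hold in any join space, and the only missing, transitivity-type property is that a join is determined by any two of its points, i.e. $b\in ac$ with $b\neq a$ implies $ab=ac$. Join-transitivity makes each join $ac$ lie in $\mathcal{F}$, so $\mathrm{cl}(\{a,c\})=ac$; the case $b=c$ is trivial, and otherwise $b\notin\{a\}=\mathrm{cl}(\{a\})$, so exchange applied to $S=\{a\}$, $x=c$, $y=b$ gives $c\in\mathrm{cl}(\{a,b\})=ab$. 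Then $a,b\in ac$ gives $ab\subseteq ac$ and $a,c\in ab$ gives $ac\subseteq ab$, so $ab=ac$; now the criterion yields that $X$ is preprojective.

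I expect the main obstacle to be the preprojectivity-to-matroid direction: converting the inclusion $a(bc)\subseteq(ab)c$ into genuine associativity of the induced operation on subsets, and then verifying that $\{x\}\cup\bigcup_{w\in W}xw$ is join-closed, together with the attendant bookkeeping. A lesser point is to match the transitivity-type property used in the converse with the precise definition of an equivalence-relational join space, and to confirm that the paper's notion of matroid is the finitary one, so that the finite-character verification above is exactly what is needed.
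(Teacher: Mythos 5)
Your argument is correct, but it reaches the corollary by a more hands-on route than the paper. The paper's proof is a short chain of equivalences: preprojective $\Leftrightarrow$ join-equivalence-relational (via Theorem \ref{sub:join_equivalence_relationality_criterion}, both sides entailing equivalence-relationality) $\Leftrightarrow$ join-transitive and symmetric (definition) $\Leftrightarrow$ join-transitive and matroid, the last step being Theorem \ref{sub:matroid_criterion_for_join_transitive_join_spaces}, which itself rests on Proposition \ref{sub:interval_transitive_interval_spaces} (for join-closed $A$ in a join-transitive space, $\vdash_{A}$ is the reverse of $\left\langle A,\,\cdot,\,\cdot\right\rangle$) and Proposition \ref{sub:join_spaces_are_combinatorial_closure_spaces}. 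You use Theorem \ref{sub:join_equivalence_relationality_criterion} too, but bypass Theorem \ref{sub:matroid_criterion_for_join_transitive_join_spaces} and Proposition \ref{sub:interval_transitive_interval_spaces}, translating between symmetry and the exchange property directly: your formula $\mathrm{cl}\left(W\cup\left\{ x\right\} \right)=\left\{ x\right\} \cup xW$ for join-closed $W$ is essentially Proposition \ref{sub:interval_transitive_interval_spaces} rederived from the semigroup structure on $P\left(X\right)$, and your converse extracts condition (\ref{enu:equivalence_relationality_criterion_2}) of Proposition \ref{sub:equivalence_relationality_criterion} from exchange over the singleton closed set $\left\{ a\right\}$ rather than reading point-symmetry off the entailment relation. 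What this buys is self-containedness and an explicit MacLane--Steinitz verification; what it costs is redoing material the paper already states: join-transitive $\Rightarrow$ proper (so $\mathrm{cl}\left(\left\{ a,c\right\} \right)=ac$) and associativity of the set join are parts of Theorem \ref{sub:join_transitivity_criterion}, which you may simply quote instead of reproving. Two small points to tighten, neither a gap: the phrase ``symmetry $ab=ba$ holds in any join space'' refers only to the endpoint swap, not to the paper's notion of a symmetric join space (what you actually establish is the two-points-determine-a-join condition, which does yield equivalence-relationality); and in the exchange step the relevant symmetry is with respect to the point $w$ (from $y\in wx$ and $y\neq w$ conclude $x\in wy$), which equivalence-relationality supplies.
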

~
\begin{cor*}
\ref{sub:matroid_projectivity_criterion} (matroid projectivity criterion)
Let $X$ be a join space. $X$ is projective iff it is dense and join-transitive
and the pair consisting of $X$ and the set of join-closed sets is
a matroid.
\end{cor*}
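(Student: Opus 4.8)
The plan is to obtain this corollary with no new geometric argument at all, by composing the three equivalences already recorded in the summary. First I would apply the projectivity criterion (Corollary~\ref{sub:projectivity_criterion}): a join space $X$ is projective if and only if it is dense and join-equivalence-relational. This immediately peels off density as one of the three conditions to be matched and reduces everything to re-expressing the property ``join-equivalence-relational''.

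Next I would invoke the join-equivalence-relationality criterion (Theorem~\ref{sub:join_equivalence_relationality_criterion}), whose list of equivalent conditions identifies ``join-equivalence-relational'' with ``preprojective''. Combining this with the previous step, $X$ is projective if and only if it is dense and preprojective. Finally I would substitute the matroid preprojectivity criterion (Corollary~\ref{sub:matroid_preprojectivity_criterion}): a join space is preprojective if and only if it is join-transitive and the pair consisting of it and the set of its join-closed sets is a matroid. Chaining the three equivalences then yields exactly the assertion: $X$ is projective if and only if it is dense, join-transitive, and the pair consisting of $X$ and the set of join-closed sets is a matroid.

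The only point that requires care — and essentially the only place an obstacle could hide — is a hypothesis mismatch: the join-equivalence-relationality criterion is stated for an equivalence-relational join space, whereas the corollary only assumes that $X$ is a join space. In the ``only if'' direction this is harmless, since a projective join space is in particular equivalence-relational. In the ``if'' direction one notes that the stated hypotheses (join-transitivity together with the matroid condition) already force $X$ to be preprojective by the matroid preprojectivity criterion, hence equivalence-relational, so that the join-equivalence-relationality criterion legitimately applies and returns ``join-equivalence-relational''; density then delivers projectivity through the projectivity criterion. Apart from this bookkeeping the proof is a one-line substitution of equivalences and introduces no genuinely new content.
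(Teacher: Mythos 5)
Your proposal is correct and is essentially the intended argument: the paper states this corollary without proof precisely because it follows by chaining the projectivity criterion \ref{sub:projectivity_criterion}, the join-equivalence-relationality criterion \ref{sub:join_equivalence_relationality_criterion} and the matroid preprojectivity criterion \ref{sub:matroid_preprojectivity_criterion}, and your handling of the equivalence-relational hypothesis in both directions is sound. The only simplification worth noting is that the detour (and hence that hypothesis check) is avoidable, since ``projective'' means ``preprojective and dense'' by definition, so the statement is immediate from \ref{sub:matroid_preprojectivity_criterion} alone.
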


\section{Line Spaces}

\noindent The concept of a line space is defined by some of the axioms
in \cite[§1]{veblen_young_1908}. It is proved that from each line
space, there is an isomoprhism onto a set-represented line space that
leaves all points fixed.

Let $X$ be a left vector space over a division ring $S\,,$ for example
$S=\mathbb{R}$ and $X=\mathbb{R}^{n}$ for an $n\in\mathbb{Z}_{\geq1}\,.$
Let $Y$ be the set of translated $1$-dimensional subspaces of $X$
and $*$ set membership, i.e. the relation between $X$ and $Y$ defined
by
\begin{align*}
a*y & :\Leftrightarrow a\in y\,.
\end{align*}

\noindent The pair $\left(Y,\,*\right)$ satisfies the following conditions:
\begin{itemize}
\item For $a,\, b\in X\,,$ if $a\neq b\,,$ then there is exactly one $y\in Y$
such that $a*y\,,$ $b*y\,.$ This $y$ is denoted by $\overleftrightarrow{ab}\,.$
\item For $y\in Y\,,$ there are two different $a,\, b\in X$ such that
$a*y\,,$ $b*y\,.$
\end{itemize}
\noindent A \emph{line structure} on a set $X$ is a pair $\left(Y,\,*\right)$
such that $Y$ is a set, $*$ is a binary relation between $X$ and
$Y\,,$ i.e. a subset of $X\times Y\,,$ and these conditions are
satisfied. A \emph{line space} is a triple $\left(X,\, Y,\,*\right)$
such that $X$ is a set and $\left(Y,\,*\right)$ is a line structure
on $X\,.$ The elements of $X$ are referred to as points. The elements
of $Y$ are referred to as lines. A point $x$ is said to be on a
line $y$ iff $x*y\,.$

Let $X$ be a left vector space. The line structure on $X$ defined
above is called the \emph{affine line structure} on $X\,.$ The line
space consisting of $X$ and the affine line structure on $X$ is
called the \emph{affine line space on} $X\,.$

In \cite[chapter 1, section 3]{ueberberg_2011}, a line space is called
a linear space. Here, the term 'line space' is used because the term
'linear space' is sometimes still in use as a synonym for the term
'vector space'.

Let $\left(X,\, Y,\,*\right)$ be a line space. For $a,\, b,\, c,\, d\in X\,,$

\noindent 
\begin{align*}
 & \mbox{If }a\neq b\,,\, c\neq d\mbox{ and }c,\, d\in\overleftrightarrow{ab}\,,\mbox{ then }\overleftrightarrow{cd}=\overleftrightarrow{ab}\,
\end{align*}

\noindent A line structure $\left(Y,\,*\right)$ on a set $X$ and
the line space $\left(X,\, Y,\,*\right)$ are called \emph{set-represented}
iff $Y$ is a set of subsets of $X$ and $*$ is set membership. Thus,
a set-represented line structure on $X$ is a pair $\left(Y,\,\in\right)$
such that $Y\subseteq P\left(X\right)\,,$ the power set of $X\,,$
$\in$ is set membership as usual and the following conditions are
satisfied:
\begin{itemize}
\item For $a,\, b\in X\,,$ if $a\neq b\,,$ then there is exactly one $y\in Y$
such that $a,\, b\in y\,.$
\item For $y\in Y\,,$ there are two different $a,\, b\in y\,.$
\end{itemize}
\noindent The affine line space on a left vector space is set-represented.

A \emph{strong homomorphism} between structures $\left(X_{1},\, Y_{1},\,*_{1}\right),\,\left(X_{2},\, Y_{2},\,*_{2}\right)\,,$
each consisting of two sets and a binary relation between them, is
a pair $\left(f,\, g\right)$ of maps $f:\, X_{1}\rightarrow X_{2}\,,$
$g:\, Y_{1}\rightarrow Y_{2}$ such that for all $a\in X_{1},\, y\in Y_{1}\,,$
\begin{align*}
f\left(a\right)*_{2}g\left(y\right) & \mbox{iff }a*_{1}y\,.
\end{align*}

\noindent $\left(f,\, g\right)$ is called an \emph{isomorphism} iff
$f$ and $g$ are bijections. Two such structures are called isomorphic
iff there is an isomorphism between them. In this case, if one of
them is a line space, then so is the other one. The following proposition
shows that each line space is isomorphic to a set-represented line
space with the same set of points.
\begin{prop}
\label{sub:set_representation_of_line_spaces} (set representation
of line spaces) Let $\left(X,\, Y,\,*\right)$ be a line space and
$l:\, Y\rightarrow P\left(X\right)$ the map defined by 
\begin{align*}
l\left(y\right) & :=\left\{ a\in X|a*y\right\} \,,
\end{align*}
i.e. for a line $y\,,$ $l\left(y\right)$ is the set of points on
$y\,.$ Then $\left(1_{X},\, l\right)$ is an isomorphism from $\left(X,\, Y,\,*\right)$
onto $\left(X,\, l\left(Y\right),\,\in\right)\,,$ where $\in$ denotes
set membership as usual, and $\left(X,\, L\left(Y\right),\,\in\right)$
is a set-represented line space.\end{prop}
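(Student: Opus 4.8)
The plan is to show first that $\left(1_{X},\, l\right)$ is a strong homomorphism, then to upgrade it to an isomorphism by checking that both component maps are bijections, and finally to read off that $\left(X,\, l\left(Y\right),\,\in\right)$ is a set-represented line space.

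For the strong homomorphism property I would simply unwind the definitions. For $a\in X$ and $y\in Y\,,$ the condition $1_{X}\left(a\right)\in l\left(y\right)$ says $a\in l\left(y\right)\,,$ which by the definition $l\left(y\right)=\left\{ a\in X\mid a*y\right\}$ is precisely $a*y\,.$ So the homomorphism condition holds with nothing more than the definition of $l\,.$ Since $1_{X}$ is trivially bijective, it remains to see that $l\colon Y\rightarrow l\left(Y\right)$ is a bijection. It is surjective onto its image by construction, so the only real content is injectivity, and this is the one point where the line-space axioms genuinely enter. Given $y_{1},\, y_{2}\in Y$ with $l\left(y_{1}\right)=l\left(y_{2}\right)\,,$ I would use the second line-structure axiom to pick two distinct points $a,\, b$ with $a*y_{1}$ and $b*y_{1}\,;$ then $a,\, b\in l\left(y_{1}\right)=l\left(y_{2}\right)\,,$ so $a*y_{2}$ and $b*y_{2}$ as well, and since $a\neq b$ the uniqueness clause of the first line-structure axiom forces $y_{1}=y_{2}\left(=\overleftrightarrow{ab}\right)\,.$ Thus $\left(1_{X},\, l\right)$ is an isomorphism. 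This injectivity step is the crux; everything else is bookkeeping.

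Having the isomorphism, I would invoke the remark made above that an isomorphic image of a line space is again a line space, to conclude that $\left(X,\, l\left(Y\right),\,\in\right)$ is a line space. It is set-represented essentially by construction: $l\left(Y\right)\subseteq P\left(X\right)$ and the incidence relation is ordinary set membership, which is exactly the definition of being set-represented. Alternatively, one can verify the two set-represented line-structure axioms for $l\left(Y\right)$ directly: existence and uniqueness of the line through two distinct points $a,\, b$ transfer from $\left(X,\, Y,\,*\right)$ via $\overleftrightarrow{ab}\mapsto l\left(\overleftrightarrow{ab}\right)$ (using injectivity of $l$ for uniqueness), and the two-distinct-points condition transfers since each $l\left(y\right)$ contains the two distinct points guaranteed on $y\,.$ I do not expect any obstacle beyond the injectivity argument for $l\,.$
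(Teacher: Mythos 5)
Your proposal is correct and follows essentially the same route as the paper: establish the strong homomorphism property directly from the definition of $l\,,$ reduce the isomorphism claim to injectivity of $l\,,$ prove injectivity by picking two distinct points on a line (second axiom) and applying the uniqueness clause of the first axiom, and then conclude that $\left(X,\, l\left(Y\right),\,\in\right)$ is a set-represented line space via the remark that an isomorphic image of a line space is a line space. No substantive difference from the paper's argument.
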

\begin{proof}
From the assumption that $\left(X,\, Y,\,*\right)$ is a line space
it follows that it suffices to prove that $\left(1_{X},\, l\right)$
is an isomorphism from $\left(X,\, Y,\,*\right)$ onto $\left(X,\, l\left(Y\right),\,\in\right)\,.$
The map $l$ is defined in such a way that $\left(1_{X},\, l\right)$
is a stong homomorphism from $\left(X,\, Y,\,*\right)$ into $\left(X,\, P\left(X\right),\,\in\right)\,,$
i.e. for $a\in X,\, y\in Y\,,$ $1_{X}\left(a\right)\in l\left(y\right)$
iff $a*y\,.$ Furthermore, $1_{X}$ is a bijection from $X$ onto
$X\,,$ and $l$ is a map from $X$ onto $l\left(X\right)\,.$ Thus,
to prove that $\left(1_{X},\, l\right)$ is an isomorphism it suffices
to prove that $l$ is injective. For $y,\, z\in Y$ it is to be proved
that $l\left(y\right)=l\left(z\right)$ implies $y=z\,.$ There are
$a,\, b\in X$ such that $a\neq b\,,$ $a*y$ and $b*y\,,$ i.e. $a,\, b\in l\left(y\right)\,.$
With the assumption $l\left(y\right)=l\left(z\right)$ it follows
that $a,\, b\in l\left(z\right)\,,$ i.e. $a*z$ and $b*z\,.$ With
$a\neq b\,,$ $a*y$ and $b*y$ it follows that $z=y\,.$ 
\end{proof}
In view of \ref{sub:set_representation_of_line_spaces} (set representation
of line spaces), the abstract theory of line spaces can be restricted
to set-represented line spaces. The more general concept is in use
for several reasons. Some line spaces are, in their most convenient
defintion, not set-represented. For example, the the projective line
space over a left vector space as defined below is not set-represented.
Also, the concept of a line space is a particular case of a concept
from general incidence geometry as in \cite[chapter 1, section 3]{ueberberg_2011}.

\section{Join Spaces}

\noindent The concept of a projective join space is built incrementally
via the concept of an equivalence-relational join space. Then all
axioms defining this concept are listed with their numbering in \cite[§1]{pieri_1899}
and in \cite[chapter II, sect. 4]{whitehead_1906}, where a translation
of the axioms into English has been given. Finally, a one-to-one correspondence
between line spaces and equivalence-relational join spaces is presented.
Projective line spaces correspond to projective join spaces.

Let $X$ be a left vector space over a division ring $S\,,$ for example
$S=\mathbb{R}$ and $X=\mathbb{R}^{n}$ for an $n\in\mathbb{Z}_{\geq1}\,.$
Let $\left\langle \cdot,\,\cdot,\,\cdot\right\rangle $ be the ternary
relation on $X$ defined by
\begin{align*}
\left\langle x,\, y,\, z\right\rangle  & :\Leftrightarrow\mbox{There is a }\lambda\in S\mbox{ such that }y=x+\lambda\left(z-x\right)\,.
\end{align*}

\noindent $X$ together with this ternary relation satisfies the following
conditions:
\begin{itemize}
\item For $a\in X\,,$ the binary relation $\left\langle a,\,\cdot,\,\cdot\right\rangle $
is reflexive on $X\,.$
\item For $b\in X\,,$ the binary relation $\left\langle \cdot,\, b,\,\cdot\right\rangle $
is symmetric.
\item For $x,\, y\in X\,,$ $\left\langle x,\, y,\, x\right\rangle $ implies
$y=x\,.$
\end{itemize}
\noindent A \emph{join} \emph{relation} on a set $X$ is a ternary
relation $\left\langle \cdot,\,\cdot,\,\cdot\right\rangle $ on $X$
such that these conditions are satisfied. A \emph{join space} is a
pair consisting of a set $X$ and a join relation on $X\,.$ 

Let $X$ be a left vector space. The join relation on $X$ defined
above is called the \emph{affine join relation} on $X\,.$ The join
space consisting of $X$ and the affine join relation on $X$ is called
the \emph{affine join space on} $X\,.$

Let $X$ be join space. The ternary relation $\left\langle \cdot,\,\cdot,\,\cdot\right\rangle $
is determined by the family of sets $\left(ac\right)_{a,\, c\in X}$
defined by

\noindent 
\begin{align*}
ac & :=\left\langle a,\,\cdot,\, c\right\rangle \\
 & =\left\{ x\in X\,|\,\left\langle a,\, x,\, c\right\rangle \right\} \,.
\end{align*}

\noindent The set $ac$ is called the \emph{join} of $a$ and $c\,.$
The above set of conditons defining the concepts of a join relation
and a join space is equivalent to the following set of conditions:
\begin{itemize}
\item For $a,\, c\in X\,,$ if $a\neq c\,,$ then $a\in ac\,.$
\item For $a,\, c\in X\,,$ if $a\neq c\,,$ then $ac\subseteq ca\,.$
\item For $a\in X\,,$ $aa=\left\{ a\right\} \,.$
\end{itemize}
\noindent A join space $\left(X,\,\left\langle \cdot,\,\cdot,\,\cdot\right\rangle \right)$
is also simply denoted by $X$ when it is clear from the context whether
the join space or only the set is meant.

In \cite[chapter I, 7.6]{vel_1993}, the term 'join space' is used
only for a particular case. Here, the terms 'join' and 'join space'
replace the terms 'interval' and 'interval space' in \cite[chapter I, 2.2]{verheul_1993},
\cite{retter_2013} and \cite{retter_2014} because the terms 'interval'
and 'interval space' suggest a a narrower class of join spaces, for
which the following example is typical: Let $X$ be a left vector
space over a totally ordered division ring $S\,,$ for example $S=\mathbb{R}$
and $X=\mathbb{R}^{n}$ for an $n\in\mathbb{Z}_{\geq1}\,.$ The ternary
relation $\left\langle \cdot,\,\cdot,\,\cdot\right\rangle $ on $X$
defined by

\noindent 
\begin{align*}
\left\langle x,\, y,\, z\right\rangle  & :\Leftrightarrow\mbox{There is a }\lambda\in S\mbox{ such that }0\leq\lambda\leq1\mbox{ and }y=x+\lambda\left(z-x\right)\,.
\end{align*}

\noindent is a join relation on $X\,.$ It is called the \emph{line-segment
 join relation} on $X\,.$ The join space consisting of $X$ and the
line-segment join relation on $X$ is called the \emph{line-segment
join space on} $X\,.$ 

In \cite[chapter I, 4.1]{vel_1993}, the term 'interval space' has
been used even in a wider sense than in \cite[chapter I, 2.2]{verheul_1993},
\cite{retter_2013} and \cite{retter_2014}.

Let $X$ be a join space.

For $a,\, b,\, c\in X\,,$
\begin{align*}
\left\langle a,\, b,\, c\right\rangle ,\, a\neq b & \Longrightarrow a\neq c\,.
\end{align*}

For $A,\, B,\, C\subseteq X\,,$
\begin{align*}
\left\langle A,\, B,\, C\right\rangle  & :\Longleftrightarrow\mbox{There are }a\in A,\, b\in B,\, c\in C\mbox{ such that }\left\langle a,\, b,\, c\right\rangle \,.
\end{align*}

\noindent \begin{flushleft}
In this notation, when $A\,,$ $B$ or $C$ is a singleton $\left\{ x\right\} \,,$
it may be replaced by $x\,.$
\par\end{flushleft}

\begin{flushleft}
For $A,\, C\subseteq X\,,$ the \emph{join} of $A$ and $C$ is the
set
\par\end{flushleft}

\noindent 
\begin{align*}
AC & :=\left\{ x\in X\,|\,\left\langle A,\, x,\, C\right\rangle \right\} \,.
\end{align*}

\noindent \begin{flushleft}
In this notation, when $A$ or $C$ is a singleton $\left\{ x\right\} \,,$
it may be replaced by $x\,.$
\par\end{flushleft}

\noindent Part (\ref{enu:set_join_operator_1}) of the following proposition
is cited from \cite[Theorem 2.3]{prenowitz_jantosciak_1979}. Parts
(\ref{enu:set_join_operator_3}) and (\ref{enu:set_join_operator_4})
are cited from \cite[Theorem 2.1]{prenowitz_jantosciak_1979}.
\begin{prop}
\label{sub:set_join_operator} (set join operator) Let $X$ be a join
space.
\begin{enumerate}
\item \label{enu:set_join_operator_1}For $A,\, B\subseteq X\,,$ $AB=BA\,.$
\item \label{enu:set_join_operator_2}For $A,\, B\subseteq X\,,$ if $B\neq\emptyset\,,$
then $A\subseteq AB\,.$
\item \label{enu:set_join_operator_3}For $A,\, B,\, C\subseteq X\,,$ $A\subseteq B\Longrightarrow AC\subseteq B,C\,.$
\item \label{enu:set_join_operator_4}For $A,\, B,\, C\subseteq X\,,$ $A\subseteq B\Longrightarrow\left[C,\, A\right]\subseteq\left[C,\, B\right]\,.$
\item \label{enu:set_join_operator_5}For $A,\, B,\, C,\, D\subseteq X\,,$
$A\subseteq B\mbox{ and }C\subseteq D\Longrightarrow AC\subseteq BD\,.$
\end{enumerate}
\end{prop}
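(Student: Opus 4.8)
The plan is to treat parts (\ref{enu:set_join_operator_1}), (\ref{enu:set_join_operator_3}) and (\ref{enu:set_join_operator_4}) as already established, since these are the cited results of Prenowitz and Jantosciak, and to derive the two remaining parts (\ref{enu:set_join_operator_2}) and (\ref{enu:set_join_operator_5}) from the join-space axioms together with those cited facts. (For completeness one may note that (\ref{enu:set_join_operator_3}) is in any case a one-line consequence of the definition: if $x\in AC$ then $\langle a,x,c\rangle$ for some $a\in A$, $c\in C$, and $a\in A\subseteq B$ gives $x\in BC$.)

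For (\ref{enu:set_join_operator_2}) I would first record the pointwise expansion of the set-valued join, namely $aB=\bigcup_{b\in B}ab$ and, iterating in the first slot, $AB=\bigcup_{a\in A}aB$; both are immediate from the definition of $AB=\{x\mid\langle A,x,B\rangle\}$ and of $\langle A,x,B\rangle$. Next I would show that $a\in aB$ whenever $B\neq\emptyset$: choose any $b\in B$; if $b=a$ then $a\in aa=\{a\}$ by the axiom $aa=\{a\}$, while if $b\neq a$ then $a\in ab$ by the axiom asserting $a\in ac$ for $a\neq c$; in either case $a\in ab\subseteq aB$. Combining, $A=\bigcup_{a\in A}\{a\}\subseteq\bigcup_{a\in A}aB=AB$, which is the claim.

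For (\ref{enu:set_join_operator_5}) the shortest route is again directly from the definition: if $x\in AC$ then $\langle a,x,c\rangle$ for some $a\in A\subseteq B$ and $c\in C\subseteq D$, whence $x\in BD$. I would present this, and additionally remark how it fits the incremental style of the section by chaining the earlier parts: from $A\subseteq B$ and (\ref{enu:set_join_operator_3}), $AC\subseteq BC$; from $C\subseteq D$ and (\ref{enu:set_join_operator_3}) applied after commuting, $CB\subseteq DB$, hence by (\ref{enu:set_join_operator_1}) $BC\subseteq BD$; so $AC\subseteq BC\subseteq BD$.

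There is no substantive obstacle here; the proof is short. The only point that requires a moment's care is the degenerate case $b=a$ in the argument for (\ref{enu:set_join_operator_2}), where the axiom $a\in ac$ is unavailable (it is postulated only for $a\neq c$) and one must instead invoke $aa=\{a\}$. Everything else is routine bookkeeping with the set join operator and the definitions already in place.
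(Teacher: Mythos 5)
Your proposal is correct and takes essentially the same route as the paper: parts (1), (3), (4) are taken from Prenowitz--Jantosciak, part (2) is proved by choosing $b\in B$ and observing $a\in ab$ (your case split $b=a$ versus $b\neq a$ merely makes explicit what the paper asserts in one line, using $aa=\{a\}$ in the degenerate case), and part (5) is obtained either directly from the definition or by chaining the monotonicity statements. The paper's own derivation of (5) is the chaining argument from (3) and (4) (its citation of ``(4) and (5)'' is evidently a typo), so your direct verification and your chained version both match its intent.
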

\begin{proof}
~
\begin{enumerate}
\item \cite[Theorem 2.3]{prenowitz_jantosciak_1979}
\item The assumption $B\neq\emptyset$ says that there is a $b\in B\,.$
For each $a\in A\,,$ $a\in ab\,.$ Substituting $b\in B\,,$ $a\in aB\,.$
\item \cite[Theorem 2.1]{prenowitz_jantosciak_1979}
\item \cite[Theorem 2.1]{prenowitz_jantosciak_1979}
\item follows from (\ref{enu:set_join_operator_4}) and (\ref{enu:set_join_operator_5})
\end{enumerate}
\end{proof}
\noindent Let $\left(X,\,\left\langle \cdot,\,\cdot,\,\cdot\right\rangle \right)$
be a join space and $A\subseteq X\,.$

The binary relation $\left\langle A,\,\cdot,\,\cdot\right\rangle $
is reflexive on $X\,.$

The following definitions of transitivity and symmetry with respect
to $A$ can be summarized as follows: For a property $P$ of a binary
relation, $X$ is said to have property $P$ with respect to $A$
iff the binary relation $\left\langle A,\,\cdot,\,\cdot\right\rangle $
restricted to $X\setminus A$ has property $P\,.$

$\left(X,\,\left\langle \cdot,\,\cdot,\,\cdot\right\rangle \right)$
and $\left\langle \cdot,\,\cdot,\,\cdot\right\rangle $ are called
\emph{transitive with respect to} $A$ or $A$-transitive iff the
following equivalent conditions are satisfied:
\begin{itemize}
\item The binary relation $\left\langle A,\,\cdot,\,\cdot\right\rangle $
is transitive on $X\setminus A\,.$
\item For all $b,\, c\in X\,,$ if $\left\langle A,\, b,\, c\right\rangle $
and $b,\, c\notin A\,,$ then $Ab\subseteq Ac\,.$
\item For all $b,\, c\in X\,,$ if $\left\langle A,\, b,\, c\right\rangle $
and $b\notin A\,,$ then $Ab\subseteq Ac\,.$
\item For all $b,\, c\in X\,,$ if $\left\langle A,\, b,\, c\right\rangle \,,$
then $Ab\subseteq Ac\,.$
\item The binary relation $\left\langle A,\,\cdot,\,\cdot\right\rangle $
is transitive on $X\,.$
\end{itemize}
\noindent The affine join space on a left vector space is transitive
with respect to each subset.. The line-segment join space on a left
vector space over a totally ordered division ring is transitive with
respect to each subset.

$\left(X,\,\left\langle \cdot,\,\cdot,\,\cdot\right\rangle \right)$
and $\left\langle \cdot,\,\cdot,\,\cdot\right\rangle $ are called\emph{
symmetric with respect to} $A$ or $A$-symmetric iff the following
equivalent conditions are satisfied:
\begin{itemize}
\item The binary relation $\left\langle A,\,\cdot,\,\cdot\right\rangle $
is symmetric on $X\setminus A\,.$
\item For all $b,\, c\in X\,,$ $c\in Ab$ and $c\notin A$ implies $b\in Ac\,.$
\end{itemize}
\noindent The affine join space on a left vector space is symmetric
with respect to each subset.. The line-segment join space on a left
vector space over a totally ordered division ring is in general not
symmetric with respect to a subset.

When $X$ is $A$-symmetric, in general the binary relation $\left\langle A,\,\cdot,\,\cdot\right\rangle $
won't be symmetric on all of $X\,.$ This situation is different from
the case that $X$ is $A$-transitive, where the binary relation$\left\langle A,\,\cdot,\,\cdot\right\rangle $
is transitive on all of $X\,.$

$\left(X,\,\left\langle \cdot,\,\cdot,\,\cdot\right\rangle \right)$
and $\left\langle \cdot,\,\cdot,\,\cdot\right\rangle $ are called\emph{
equivalence-relational with respect to} $A$ or $A$-equivalence-relational
 iff the following equivalent conditions are satisfiesd:
\begin{itemize}
\item The restriciton of the binary relation $\left\langle a,\,\cdot,\,\cdot\right\rangle $
to $X\setminus A$ is an equivalence relation.
\item $X$ is $A$-transitive and $A$-symmetric.
\end{itemize}
\noindent The affine join space on a left vector space is equivalence-relational
with respect to each subset.. The line-segment join space on a left
vector space over a totally ordered division ring is in general not
equivalence-relational with respect to a subset.

In the definitions of transitivity, symmetry and euqivalence-relationality
with respect to $A\,,$ when $A$ is a singleton $\left\{ a\right\} \,,$
it may be replaced by $a\,.$ Thus, $X$ is $a$-transitive, $a$-symmetric,
$a$-equivalence-relational iff it is $\left\{ a\right\} $-transitive,
$\left\{ a\right\} $-symmetric, $\left\{ a\right\} $-equivalence-relational,
respectively.
\begin{lem}
\label{sub:transitivity_relative_to_a_set} (transitivity relative
to a set) Let $X$ be a join space. For $A\subseteq X\,,$ $X$ is
$A$-transitive iff for all $b\in X\,,$ $A\left(Ab\right)\subseteq Ab\,.$\end{lem}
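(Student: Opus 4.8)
The plan is to reduce the claim to one of the equivalent formulations of $A$-transitivity listed above by rewriting $A(Ab)$ as a union of joins. First I would unfold the set-join notation: for $b\in X$ and $x\in X$ one has $x\in A(Ab)$ iff $\langle A,\,x,\,Ab\rangle$, i.e. iff there are $a\in A$ and $c\in Ab$ with $\langle a,\,x,\,c\rangle$, which holds iff $x\in Ac$ for some $c\in Ab$. Hence $A(Ab)=\bigcup_{c\in Ab}Ac$, so that the condition ``$A(Ab)\subseteq Ab$ for all $b\in X$'' is the same as ``for all $b\in X$ and all $c\in Ab$, $Ac\subseteq Ab$''. Since $c\in Ab$ means exactly $\langle A,\,c,\,b\rangle$, relabelling the two bound variables turns this into ``for all $b,\,c\in X$, $\langle A,\,b,\,c\rangle$ implies $Ab\subseteq Ac$'', which is one of the conditions shown above to be equivalent to $A$-transitivity. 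That already gives both implications, but I would spell them out.

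For the forward implication, assume $X$ is $A$-transitive and let $b\in X$ and $x\in A(Ab)$; choose $c\in Ab$ with $x\in Ac$. From $c\in Ab$, i.e. $\langle A,\,c,\,b\rangle$, $A$-transitivity yields $Ac\subseteq Ab$, so $x\in Ab$; hence $A(Ab)\subseteq Ab$. For the converse, assume $A(Ab)\subseteq Ab$ for every $b\in X$ and suppose $\langle A,\,b,\,c\rangle$, i.e. $b\in Ac$. Then $Ab\subseteq A(Ac)$ by the monotonicity part of Proposition \ref{sub:set_join_operator} (applied to $\{b\}\subseteq Ac$, together with commutativity of the set join), and $A(Ac)\subseteq Ac$ by hypothesis; therefore $Ab\subseteq Ac$, which is the $A$-transitivity condition just recalled.

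The argument is essentially bookkeeping with the set-join notation. The only points that need care are the identification $A(Ab)=\bigcup_{c\in Ab}Ac$ — so that the existential quantifier over the ``middle'' set is extracted correctly — and invoking Proposition \ref{sub:set_join_operator} on the correct (left) factor of the join, which is where commutativity of the set join is used. I do not anticipate any genuine obstacle beyond this.
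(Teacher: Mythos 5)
Your proof is correct and follows essentially the same route as the paper's: both unfold the set-join notation in $A\left(Ab\right)\subseteq Ab$ and reduce it to one of the equivalent formulations of $A$-transitivity listed in the definition (the paper passes to transitivity of the relation $\left\langle A,\,\cdot,\,\cdot\right\rangle$ on $X\,,$ you to the condition $\left\langle A,\, b,\, c\right\rangle \Rightarrow Ab\subseteq Ac$). Your extra appeal to the monotonicity parts of \ref{sub:set_join_operator} in the converse direction is a harmless variation on the same bookkeeping.
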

\begin{proof}
The following conditions are equivalent:

\begin{align*}
 & \mbox{For all }b\in X\,,\, A\left(Ab\right)\subseteq Ab\,.\\
 & \mbox{For all }b,\, x,\, y\in X\,,\, x\in Ay\mbox{ and }y\in Ab\Longrightarrow x\in Ab\,.\\
 & \mbox{For all }x,\, y\in A\,,\,\left\langle A,\, x,\, y\right\rangle \mbox{ and }\left\langle A,\, y,\, b\right\rangle \Longrightarrow\left\langle A,\, x,\, b\right\rangle \,.\\
 & X\mbox{ is }A\mbox{-transitive.}
\end{align*}

\end{proof}
Let $X$ be a join space.

A triple $\left(A,\, B,\, C\right)$ of subsets of $X$ is called
\emph{dependent} iff $A\cap B\neq\emptyset$ or $\left(AB\right)\cap C\neq\emptyset\,.$
In this definition, when $A\,,$ $B$ or $C$ is a singleton $\left\{ x\right\} \,,$
it may be replaced by $x\,.$ For example, a triple $\left(a,\, b,\, c\right)$
of elements of $X$ is dependent iff $b=a$ or $c\in ab\,.$

A quadruple $\left(A,\, B,\, C,\, D\right)$ of subsets of $X$ is
called \emph{dependent} iff the following equivalent conditions are
satisfied:
\begin{itemize}
\item $\left(A,\, B,\, C\right)$ is dependent or $\left(AB\right)C\cap D\neq\emptyset\,.$
\item $A\cap B\neq\emptyset$ or $\left(AB\right)\cap C\neq\emptyset$ or
$\left(AB\right)C\cap D\neq\emptyset\,.$
\end{itemize}
\noindent In this definition, when $A\,,$ $B\,,$ $C$ or $D$ is
a singleton $\left\{ x\right\} \,,$ it may be replaced by $x\,.$
For example, a quadruple $\left(a,\, b,\, c,\, d\right)$ of elements
of $X$ is dependent iff $b=a$ or $c\in ab$ or $d\in\left(ab\right)c\,.$
Each of the following two figures illustrates the case $d\in\left(ab\right)c\,.$ 

~

\noindent 
\[
\xy<1cm,0cm>:(2,0)*=0{\bullet}="a",(2.25,-0.25)*=0{a},(1,0)*=0{\bullet}="b",(1,-0.25)*=0{b},(2,1)*=0{\bullet}="d",(2.25,1)*=0{d},(0,0)*=0{\bullet}="e",(2,2)*=0{}="f","e";"d"**@{-}?!{"f";"b"}*{\bullet}="c",(1.08,0.92)*=0{c},"e";"a"**@{-},(4,0)*=0{\bullet}="a1",(4,-0.25)*=0{a},(6,0)*=0{\bullet}="b1",(6.25,-0.25)*=0{b},(6,2)*=0{\bullet}="c1",(6.25,2)*=0{c},(6,1)*=0{}="a'",(5,0)*=0{\bullet}="c'","a1";"a'"**@{}?!{"c1";"c'"}*{\bullet}="d1",(5.08,0.92)*=0{d},"a1";"b1"**@{-},"c1";"c'"**@{-}\endxy
\]
~

\noindent For the next proposition, each of the following two figures
illustrates condition (\ref{enu:rejoinability_criterion_1}).

~

\noindent 
\[
\xy<1cm,0cm>:(2,0)*=0{\bullet}="a",(2.25,-0.25)*=0{a},(1,0)*=0{\bullet}="b",(1,-0.25)*=0{b},(2,1)*=0{\bullet}="d",(2.25,1)*=0{d},(0,0)*=0{\bullet}="e",(2,2)*=0{\bullet}="f","e";"d"**@{.}?!{"f";"b"}*{\bullet}="c",(1.08,0.92)*=0{c},"e";"a"**@{.},"f";"a"**@{-},"f";"b"**@{-},(4,0)*=0{\bullet}="a1",(4,-0.25)*=0{a},(6,0)*=0{\bullet}="b1",(6.25,-0.25)*=0{b},(6,2)*=0{\bullet}="c1",(6.25,2)*=0{c},(6,1)*=0{\bullet}="a'",(5,0)*=0{\bullet}="c'","a1";"a'"**@{-}?!{"c1";"c'"}*{\bullet}="d1",(5.08,0.92)*=0{d},"a1";"b1"**@{.},"c1";"b1"**@{-},"c1";"c'"**@{.}\endxy
\]
~
\begin{prop}
\label{sub:rejoinability_criterion} (rejoinability criterion) Let
$X$ be a join space. For $a,\, b,\, c,\, d\in X\,,$ if $X$ is $a$-equivalence-relational
and $b$-symmetric, then the following conditions are equivalent:
\begin{enumerate}
\item \label{enu:rejoinability_criterion_1}If $d\in a\left(bc\right)\,,$
then $d\in\left(ab\right)c\,.$
\item \label{enu:rejoinability_criterion_2}If $\left(c,\, b,\, a,\, d\right)$
is dependent, then $\left(a,\, b,\, c,\, d\right)$ is dependent.
\end{enumerate}
\end{prop}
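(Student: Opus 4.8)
The plan is to rewrite both dependence conditions in elementary form and then match them disjunct by disjunct. Since $bc=cb$ and, by \ref{sub:set_join_operator}(\ref{enu:set_join_operator_1}), $a\left(bc\right)=\left(bc\right)a$, the quadruple $\left(c,\,b,\,a,\,d\right)$ is dependent iff $c=b$, or $a\in bc$, or $d\in a\left(bc\right)$; likewise $\left(a,\,b,\,c,\,d\right)$ is dependent iff $a=b$, or $c\in ab$, or $d\in\left(ab\right)c$. So (\ref{enu:rejoinability_criterion_2}) says exactly that each of the three left disjuncts forces one of the three right ones, while (\ref{enu:rejoinability_criterion_1}) is the implication between the third left disjunct and the third right disjunct.

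For (\ref{enu:rejoinability_criterion_1})$\Rightarrow$(\ref{enu:rejoinability_criterion_2}) --- which uses only $b$-symmetry and the identities $x\in ax$, $a\in ac$ --- suppose $\left(c,\,b,\,a,\,d\right)$ is dependent. If $c=b$, then $c=b\in ab$. If $a\in bc$, then either $a=b$, or $a\neq b$ and $b$-symmetry applied to $a\in bc$ gives $c\in ba=ab$. If $d\in a\left(bc\right)$, then $d\in\left(ab\right)c$ by (\ref{enu:rejoinability_criterion_1}). In every case one of the three disjuncts for dependence of $\left(a,\,b,\,c,\,d\right)$ holds.

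For (\ref{enu:rejoinability_criterion_2})$\Rightarrow$(\ref{enu:rejoinability_criterion_1}), suppose $d\in a\left(bc\right)$. Then $d\in\left(cb\right)a$, so $\left(c,\,b,\,a,\,d\right)$ is dependent, and (\ref{enu:rejoinability_criterion_2}) yields that $\left(a,\,b,\,c,\,d\right)$ is dependent: $a=b$, or $c\in ab$, or $d\in\left(ab\right)c$. The third alternative is the conclusion. If $a=b$, then $\left(ab\right)c=ac$ and $d\in a\left(bc\right)=a\left(ac\right)\subseteq ac=\left(ab\right)c$, the inclusion being $a$-transitivity via \ref{sub:transitivity_relative_to_a_set} with $A=\left\{ a\right\}$.

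This leaves the case $c\in ab$, which is the crux: here (\ref{enu:rejoinability_criterion_2}) is vacuous --- $\left(a,\,b,\,c,\,d\right)$ is dependent for the trivial reason --- so (\ref{enu:rejoinability_criterion_1}) has to be proved outright, i.e. $a\left(bc\right)\subseteq\left(ab\right)c$. The coincidences $c\in\left\{ a,\,b\right\}$ and $a=b$ are handled at once from $aa=\left\{ a\right\}$, $x\in ax$, $a\in ac$ and \ref{sub:set_join_operator}(\ref{enu:set_join_operator_2}); so assume $a,\,b,\,c$ distinct. From $c\in ab$, $a$-symmetry gives $b\in ac$, and then $a$-transitivity gives $ac\subseteq ab$ as well as $ab\subseteq ac$, so $ab=ac$; and $b$-symmetry applied to $c\in ba$ gives $a\in bc$. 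Hence $\left(ab\right)c\supseteq ab\cup bc$: one has $ab=ac\subseteq\left(ab\right)c$ by \ref{sub:set_join_operator}(\ref{enu:set_join_operator_2}) and $bc=\left\{ b\right\} c\subseteq\left(ab\right)c$ by \ref{sub:set_join_operator}(\ref{enu:set_join_operator_3}) because $b\in ab$. It remains to show $aw\subseteq\left(ab\right)c$ for every $w\in bc$: when $w\in ab$ this is $aw\subseteq a\left(ab\right)\subseteq ab\subseteq\left(ab\right)c$ by \ref{sub:set_join_operator}(\ref{enu:set_join_operator_5}) and $a$-transitivity, and when $w\notin ab$ one uses $b$-symmetry (so $c\in bw$) together with the fact that, for $u\neq a$, the set $au$ equals $\left\{ a\right\}$ together with the $\left\langle a,\,\cdot,\,\cdot\right\rangle$-equivalence class of $u$ in $X\setminus\left\{ a\right\}$, in order to produce, for each point of $aw$, an element $z\in ab$ with that point in $zc$. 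This final verification is the step I expect to be the main obstacle; everything before it is bookkeeping with \ref{sub:set_join_operator} and with the equivalent descriptions of $A$-transitivity and $A$-symmetry.
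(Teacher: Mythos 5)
Your reduction of both dependence conditions to disjunctions, your proof of (\ref{enu:rejoinability_criterion_1}) $\Rightarrow$ (\ref{enu:rejoinability_criterion_2}) via $b$-symmetry, and your handling of the cases $a=b$ and $d\in\left(ab\right)c$ in the converse all coincide with the paper's argument. The gap is exactly where you flag it: the case $c\in ab$ with $a,\,b,\,c$ distinct and $w\in bc\setminus ab\,,$ where you still owe $aw\subseteq\left(ab\right)c\,.$ This step cannot be completed from the stated hypotheses alone, because $a$-equivalence-relationality and $b$-symmetry give no control over $bc$ outside the $\left\langle a,\,\cdot,\,\cdot\right\rangle$-class of $b\,;$ in fact (\ref{enu:rejoinability_criterion_2}) $\Rightarrow$ (\ref{enu:rejoinability_criterion_1}) fails in that generality. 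Take $X=\left\{ a,\, b,\, c,\, d,\, w\right\}$ with $xx=\left\{ x\right\}$ for all $x$ and joins $ab=ac=\left\{ a,\, b,\, c\right\} \,,$ $ad=aw=\left\{ a,\, d,\, w\right\} \,,$ $bc=\left\{ a,\, b,\, c,\, w\right\} \,,$ $bw=\left\{ b,\, c,\, w\right\} \,,$ $bd=\left\{ b,\, d\right\} \,,$ $cd=\left\{ c,\, d\right\} \,,$ $cw=\left\{ c,\, w\right\} \,,$ $dw=\left\{ d,\, w\right\} \,,$ with $\left\langle x,\, y,\, z\right\rangle$ defined as $y$ lying in the join of $x$ and $z\,.$ This is a join space; $X$ is $a$-equivalence-relational (the $\left\langle a,\,\cdot,\,\cdot\right\rangle$-classes on $X\setminus\left\{ a\right\}$ are $\left\{ b,\, c\right\}$ and $\left\{ d,\, w\right\}$) and $b$-symmetric; $c\in ab\,,$ so (\ref{enu:rejoinability_criterion_2}) holds trivially; yet $d\in aw\subseteq a\left(bc\right)$ since $w\in bc\,,$ while $\left(ab\right)c=ac\cup bc\cup cc=\left\{ a,\, b,\, c,\, w\right\}$ does not contain $d\,,$ so (\ref{enu:rejoinability_criterion_1}) fails.

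What closes this case in the paper is an additional hypothesis that is used there without being stated: in its Case 2.2 the paper invokes $b$-transitivity, which is not among the hypotheses of the proposition but does hold in the only application (Theorem \ref{sub:join_equivalence_relationality_criterion}, where $X$ is equivalence-relational). With $b$-transitivity the crux is one line: $c\in ab=ba$ means $\left\langle b,\, c,\, a\right\rangle \,,$ hence $bc\subseteq ba=ab\,,$ and then $a\left(bc\right)\subseteq a\left(ab\right)\subseteq ab\subseteq\left(ab\right)c$ by \ref{sub:set_join_operator} (set join operator) and $a$-transitivity via \ref{sub:transitivity_relative_to_a_set} (transitivity relative to a set), so $d\in\left(ab\right)c\,.$ So either add $b$-transitivity (or equivalence-relationality of $X$) to your hypotheses and finish as above, or accept that the final verification you postponed cannot succeed as stated: your instinct that it is the main obstacle was right, and no bookkeeping with \ref{sub:set_join_operator} will produce the required $z\in ab\,.$
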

\begin{proof}
The assumption that $X$ is $a$-equivalence-relational entails that
$X$ is $a$-transitive and $a$-symmetric.

Step 1. (\ref{enu:rejoinability_criterion_1}) $\Rightarrow$ (\ref{enu:rejoinability_criterion_2}).
(\ref{enu:rejoinability_criterion_2}) says: If $b=c$ or $a\in bc$
or $d\in a\left(bc\right)\,,$ then $b=a$ or $c\in ab$ or $d\in(ab)c\,.$
From (\ref{enu:rejoinability_criterion_1}) it follows that it suffices
to prove that $b=c$ or $a\in bc$ implies $b=a$ or $c\in ab\,.$

Case 1.1. $b=c\,.$ Substituting into $b\in ab\,,$ $c\in ab\,.$

Case 1.2. $a\in bc\,,$ i.e. $\left\langle b,\, a,\, c\right\rangle \,.$
It is to be proved that $b\neq a$ implies $c\in ab\,,$ i.e. $\left\langle b,\, c,\, a\right\rangle \,.$
This claim follows from the assumptions that $\left\langle b,\, a,\, c\right\rangle \,,$
$b\neq a$ and $X$ is $b$-symmetric.

Step 2. (\ref{enu:rejoinability_criterion_2}) $\Rightarrow$ (\ref{enu:rejoinability_criterion_1}).
(\ref{enu:rejoinability_criterion_2}) says: If $b=c$ or $a\in bc$
or $d\in a\left(bc\right)\,,$ then $b=a$ or $c\in ab$ or $d\in(ab)c\,.$
With the assumption $d\in a\left(bc\right)$ it follows that $b=a$
or $c\in ab$ or $d\in\left(ab\right)c\,.$

\noindent Case 2.1. $b=a\,.$ It follows from this assumption, from
the assumption that $X$ is $a$-transitive by \ref{sub:transitivity_relative_to_a_set}
(transitivity relative to a set) and from $\left\{ a\right\} \subseteq ab$
by \ref{sub:set_join_operator} (\ref{enu:set_join_operator_3}) (set
join operator): 
\begin{align*}
a\left(bc\right) & =a\left(ac\right)\\
 & \subseteq ac\\
 & \subseteq\left(ab\right)c\,.
\end{align*}

\noindent With the assumption that $d\in a\left(bc\right)\,,$ it
follows that $d\in\left(ab\right)c\,.$

Case 2.2. $c\in ab\,,$ i.e. $\left\langle b,\, c,\, a\right\rangle \,.$
From this assumption and the assumption that $X$ is $b$-transitive
it follows  that $bc\subseteq ba\,.$ It follows by \ref{sub:set_join_operator}
(\ref{enu:set_join_operator_4}) (set join operator), with the assumption
that $X$ is $a$-transitive by \ref{sub:transitivity_relative_to_a_set}
(transitivity relative to a set) and by \ref{sub:set_join_operator}
(\ref{enu:set_join_operator_2}) (set join operator): 
\begin{align*}
a\left(bc\right) & \subseteq a\left(ba\right)\\
 & =a\left(ab\right)\\
 & \subseteq ab\\
 & \subseteq\left(ab\right)c\,.
\end{align*}
With the assumption that $d\in a\left(bc\right)$ it follows that
$d\in\left(ab\right)c\,.$

Case 2.3. $d\in\left(ab\right)c\,.$ For this case, nothing is to
be proved.\end{proof}
\begin{prop}
\label{sub:join_spaces_symmetric_with_respect_to_a_base_set} (join
spaces symmetric with respect to a base-set) Let $X$ be a join space.
For $A,\, B,\, C\subseteq X\,,$ if $X$ is $A$-symmetric, $C\neq\emptyset$
and $\left(A,\, B,\, C\right)$ is dependent, then $\left(A,\, C,\, B\right)$
is dependent. \end{prop}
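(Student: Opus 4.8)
The plan is to unfold the definition of dependence for triples into its two disjuncts and handle each separately, invoking $A$-symmetry only where it legitimately applies. Since $\left(A,\,B,\,C\right)$ is dependent, either $A\cap B\neq\emptyset$ or $\left(AB\right)\cap C\neq\emptyset$, and the goal is to produce $A\cap C\neq\emptyset$ or $\left(AC\right)\cap B\neq\emptyset$.

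First I would treat the case $A\cap B\neq\emptyset$: choosing $x\in A\cap B$ and using $C\neq\emptyset$ together with \ref{sub:set_join_operator} (\ref{enu:set_join_operator_2}) (set join operator), which gives $A\subseteq AC$, one gets $x\in\left(AC\right)\cap B$, so $\left(A,\,C,\,B\right)$ is dependent.

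For the case $\left(AB\right)\cap C\neq\emptyset$, pick $c\in\left(AB\right)\cap C$. If $c\in A$, then $c\in A\cap C$ and $\left(A,\,C,\,B\right)$ is dependent at once. Otherwise $c\notin A$; since $c\in AB$ there is a $b\in B$ with $c\in Ab$, and the hypothesis that $X$ is $A$-symmetric yields $b\in Ac$. As $c\in C$, monotonicity of the join (\ref{sub:set_join_operator} (\ref{enu:set_join_operator_3}) (set join operator)) gives $Ac\subseteq AC$, hence $b\in\left(AC\right)\cap B$ and $\left(A,\,C,\,B\right)$ is dependent.

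The one point requiring care is that $A$-symmetry is only a statement about points outside $A$, so the element $c$ obtained from $\left(AB\right)\cap C$ must be split according to whether $c\in A$; when $c\in A$ the first disjunct of the conclusion holds directly and symmetry is not invoked. I do not expect a genuine obstacle beyond this case distinction — the rest is routine manipulation with the set-join operator and the assumption $C\neq\emptyset$.
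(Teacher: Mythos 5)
Your proof is correct and follows essentially the same route as the paper: the same split into the two disjuncts of dependence, the same use of \ref{sub:set_join_operator} (\ref{enu:set_join_operator_2}) with $C\neq\emptyset$ in the first case, and the same subdivision of the second case according to whether $c\in A$, invoking $A$-symmetry only when $c\notin A$. The only quibble is the citation of \ref{sub:set_join_operator} (\ref{enu:set_join_operator_3}) for $Ac\subseteq AC$, which is monotonicity in the first argument; you want part (\ref{enu:set_join_operator_4}) (or simply note that $b\in Ac$ and $c\in C$ give $\left\langle A,\,b,\,C\right\rangle$ directly), as the paper does implicitly.
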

\begin{proof}
The assumption that $\left(A,\, B,\, C\right)$ is dependent says
that $A\cap B\neq\emptyset$ or $\left(AB\right)\cap C\neq\emptyset\,.$
It is to be proved that $A\cap C\neq\emptyset$ or $\left(AC\right)\cap B\neq\emptyset\,.$

Case 1.$A\cap B\neq\emptyset\,.$ From the assumption that $C\neq\emptyset$
it follows by \ref{sub:set_join_operator} (\ref{enu:set_join_operator_2})
(set join operator) that $A\subseteq AC\,.$ Therefore, $A\cap B\subseteq\left(AC\right)\cap B\,.$
With the assumption that $A\cap B\neq\emptyset$ it follows that $\left(AC\right)\cap B\neq\emptyset\,.$

Case 2. $\left(AB\right)\cap C\neq\emptyset\,,$ i.e. there are $b\in B\,,$
$c\in C$ such that $c\in Ab\,.$

Case 2.1. $c\notin A\,.$ From this assumption and the assumptions
that $c\in Ab$ and $X$ is $A$-symmetric it follows that $b\in Ac\,.$
With  the assumptions $b\in B\,,$ $c\in C$ it follows that $b\in AC\cap B\,.$
Consequently, $\left(AC\right)\cap B\neq\emptyset\,.$

Case 2.2. $c\in A\,.$ With the assumption $c\in C\,,$ $c\in A\cap C\,.$
Consequently, $A\cap C\neq\emptyset\,.$
\end{proof}
\noindent For the next proposition, each of the following two figures
illustrates
\begin{itemize}
\item the case $d\in\left(ab\right)c$ of condition (\ref{enu:quadruple_dependence_criterion_1})
\item the case $\left(ab\right)\cap\left(cd\right)\neq\emptyset$ of condition
(\ref{enu:quadruple_dependence_criterion_2})
\end{itemize}
~

\noindent 
\[
\xy<1cm,0cm>:(2,0)*=0{\bullet}="a",(2.25,-0.25)*=0{a},(1,0)*=0{\bullet}="b",(1,-0.25)*=0{b},(2,1)*=0{\bullet}="d",(2.25,1)*=0{d},(0,0)*=0{\bullet}="e",(2,2)*=0{}="f","e";"d"**@{-}?!{"f";"b"}*{\bullet}="c",(1.08,0.92)*=0{c},"e";"a"**@{-},(4,0)*=0{\bullet}="a1",(4,-0.25)*=0{a},(6,0)*=0{\bullet}="b1",(6.25,-0.25)*=0{b},(6,2)*=0{\bullet}="c1",(6.25,2)*=0{c},(6,1)*=0{}="a'",(5,0)*=0{\bullet}="c'","a1";"a'"**@{}?!{"c1";"c'"}*{\bullet}="d1",(5.08,0.92)*=0{d},"a1";"b1"**@{-},"c1";"c'"**@{-}\endxy
\]
~
\begin{prop}
\label{sub:quadruple_dependence_criterion} (quadruple dependence
criterion) Let $X$ be a join space. For $a,\, b,\, c,\, d\in X\,,$
if $X$ is $c$-symmetric, then the following conditions are equivalent:
\begin{enumerate}
\item \label{enu:quadruple_dependence_criterion_1}$\left(a,\, b,\, c,\, d\right)$
is dependent.
\item \label{enu:quadruple_dependence_criterion_2}$a=b$ or $c=d$ or $\left(ab\right)\cap\left(cd\right)\neq\emptyset\,.$
\end{enumerate}
\end{prop}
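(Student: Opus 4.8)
The plan is to unfold the definition of dependence of the quadruple $\left(a,\, b,\, c,\, d\right)\,,$ namely that $a=b$ or $c\in ab$ or $d\in\left(ab\right)c\,,$ and to compare it term by term with condition (\ref{enu:quadruple_dependence_criterion_2}). Before starting, I would record the two elementary facts used repeatedly: $c\in cd$ for every $d\in X$ (immediate from $cc=\left\{ c\right\}$ when $c=d\,,$ and from the join axiom ``$a\in ac$ for $a\neq c$'' when $c\neq d$), and $cx=xc$ whenever $c\neq x$ (from the join axiom ``$ac\subseteq ca$ for $a\neq c$'' applied in both directions). The proof then splits into the two implications, and in each of them the disjunct $a=b$ is simply carried along since it appears in both conditions.

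For (\ref{enu:quadruple_dependence_criterion_1}) $\Rightarrow$ (\ref{enu:quadruple_dependence_criterion_2}) I would assume that $\left(a,\, b,\, c,\, d\right)$ is dependent and consider the three cases of the definition. If $a=b\,,$ there is nothing to prove. If $c\in ab\,,$ then $c\in\left(ab\right)\cap\left(cd\right)$ because $c\in cd\,,$ so (\ref{enu:quadruple_dependence_criterion_2}) holds. If $d\in\left(ab\right)c\,,$ choose $x\in ab$ with $d\in xc\,;$ if $d=c$ then (\ref{enu:quadruple_dependence_criterion_2}) already holds, and otherwise $x\neq c$ (since $x=c$ would force $d\in cc=\left\{ c\right\}$), so $d\in xc=cx$ with $d\neq c\,,$ and the assumption that $X$ is $c$-symmetric gives $x\in cd\,,$ whence $x\in\left(ab\right)\cap\left(cd\right)\,.$

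For (\ref{enu:quadruple_dependence_criterion_2}) $\Rightarrow$ (\ref{enu:quadruple_dependence_criterion_1}) I would argue as follows. If $a=b\,,$ the quadruple is dependent, so I may assume $a\neq b\,,$ hence $ab\neq\emptyset\,.$ If $c=d\,,$ then \ref{sub:set_join_operator} (\ref{enu:set_join_operator_2}) and (\ref{enu:set_join_operator_1}) (set join operator) give $\left\{ c\right\} \subseteq\left\{ c\right\} \left(ab\right)=\left(ab\right)c\,,$ so $d=c\in\left(ab\right)c$ and the quadruple is dependent. If $\left(ab\right)\cap\left(cd\right)\neq\emptyset\,,$ pick $x$ in this intersection; if $x=c$ then $c\in ab\,,$ and if $x\neq c$ then $x\in cd$ together with $c$-symmetry gives $d\in cx=xc\,,$ so $d\in\left(ab\right)c$ via $x\in ab\,.$ In every case $\left(a,\, b,\, c,\, d\right)$ is dependent.

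I do not expect a serious obstacle here. The only point needing care is the correct use of $c$-symmetry: it only yields the ``swapped'' membership for a point lying outside the base set $\left\{ c\right\}\,,$ which is precisely why the degenerate sub-cases $d=c$ and $x=c$ must be separated off and handled instead by the join axioms and by \ref{sub:set_join_operator} (set join operator).
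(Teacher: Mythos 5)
Your proof is correct. The paper takes a different route in form, though not in substance: it rewrites condition (\ref{enu:quadruple_dependence_criterion_1}) as ``$a=b$ or $\left(c,\, ab,\, d\right)$ is dependent'' and condition (\ref{enu:quadruple_dependence_criterion_2}) as ``$a=b$ or $\left(c,\, d,\, ab\right)$ is dependent'', and then gets the equivalence by applying the already proved set-level statement \ref{sub:join_spaces_symmetric_with_respect_to_a_base_set} (join spaces symmetric with respect to a base-set) with $A=\left\{ c\right\}$ in both directions (the required nonemptiness of $\left\{ d\right\}$ and of $ab$ being automatic), together with commutativity of the set join operator. You instead inline the content of that proposition in the special case $A=\left\{ c\right\}\,,$ $B=ab\,,$ $C=\left\{ d\right\}$: your cases $c\in ab$ and $d\in\left(ab\right)c\,,$ with the degenerate sub-cases $d=c$ and $x=c$ split off, reproduce at the level of elements exactly its Case 1, Case 2.1 and Case 2.2, and your use of \ref{sub:set_join_operator} (set join operator) for the sub-case $c=d$ matches the paper's use of it there. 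What the paper's route buys is brevity and reuse of a general lemma; what yours buys is a self-contained argument that makes explicit the one delicate point, namely that $c$-symmetry only applies to points outside the base set $\left\{ c\right\}\,,$ which you handle correctly.
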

\begin{proof}
From the assumption that $X$ is $c$-symmetric it follows by \ref{sub:join_spaces_symmetric_with_respect_to_a_base_set}
(join spaces symmetric with respect to a base-set) that the following
conditions are equivalent:

\begin{align*}
 & \left(a,\, b,\, c,\, d\right)\mbox{ is dependent.}\\
 & a=b\mbox{ or }c\in ab\mbox{ or }d\in\left(ab\right)c\,.\\
 & a=b\mbox{ or }\left\{ c\right\} \cap ab\neq\emptyset\mbox{ or }d\in\left(ab\right)c\,.\\
 & a=b\mbox{ or }\left(c,\, ab,\, d\right)\mbox{ is dependent.}\\
 & a=b\mbox{ or }\left(c,\, d,\, ab\right)\mbox{ is dependent.}\\
 & a=b\mbox{ or }c=d\mbox{ or }\left(ab\right)\cap\left(cd\right)\neq\emptyset\,.
\end{align*}
\end{proof}
\begin{prop}
\label{sub:base_point_equivalence_relationality_criterion} (base-point
equivalence-relationality criterion) Let $X$ be a join space. For
$a\in X\,,$ the following conditions are equivalent:
\begin{enumerate}
\item \label{enu:base_point_equivalence_relationality_criterion_1}$X$
is $a$-equivalence-relational.
\item \label{enu:base_point_equivalence_relationality_criterion_2}For all
$b,\, c\in X\,,$ $\left\langle a,\, b,\, c\right\rangle $ and $a\neq b$
imply $ab\subseteq ac$ and $\left\langle a,\, c,\, b\right\rangle \,.$
\item \label{enu:base_point_equivalence_relationality_criterion_3}For all
$b,\, c\in X\,,$ $\left\langle a,\, b,\, c\right\rangle $ and $a\neq b$
imply $ab=ac\,.$
\end{enumerate}
\end{prop}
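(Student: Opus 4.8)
The plan is to prove the cycle $(1)\Leftrightarrow(2)\Leftrightarrow(3)$ by unwinding the definitions of $a$-transitivity and $a$-symmetry and collecting a few elementary facts about joins. Recall that $X$ is $a$-equivalence-relational iff it is both $a$-transitive and $a$-symmetric; specializing the lists of equivalent conditions to the singleton $A=\{a\}$, $a$-transitivity means ``$\langle a,b,c\rangle$ implies $ab\subseteq ac$, for all $b,c$'', while $a$-symmetry means ``$c\in ab$ and $c\neq a$ imply $b\in ac$, for all $b,c$'', i.e. ``$\langle a,c,b\rangle$ and $c\neq a$ imply $\langle a,b,c\rangle$''. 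Three small observations will make the bookkeeping clean: first, $aa=\{a\}\subseteq ac$ for every $c$ (by \ref{sub:set_join_operator}(\ref{enu:set_join_operator_2})), so adjoining the hypothesis $a\neq b$ to the implication ``$\langle a,b,c\rangle$ implies $ab\subseteq ac$'' changes nothing; second, $b\in ab$ for all $a,b$ (by \ref{sub:set_join_operator}(\ref{enu:set_join_operator_1}) and (\ref{enu:set_join_operator_2})); third, the implication displayed just before \ref{sub:set_join_operator} gives that $\langle a,b,c\rangle$ together with $a\neq b$ forces $a\neq c$.

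For $(1)\Rightarrow(2)$: assuming $X$ is $a$-transitive and $a$-symmetric, and given $\langle a,b,c\rangle$ with $a\neq b$, $a$-transitivity yields $ab\subseteq ac$, and $a$-symmetry (applied with the roles of $b$ and $c$ interchanged, using $a\neq b$) yields $\langle a,c,b\rangle$. For $(2)\Rightarrow(1)$: from $(2)$, the inclusion half gives $a$-transitivity once the trivial case $a=b$ is dispatched via $aa=\{a\}\subseteq ac$, and the $\langle a,c,b\rangle$ half is, after relabelling $b$ and $c$, exactly the statement of $a$-symmetry. Hence $(1)\Leftrightarrow(2)$.

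For $(2)\Leftrightarrow(3)$: condition $(3)$ trivially entails the inclusion $ab\subseteq ac$ of $(2)$, so the point is to pass between the second conclusion $\langle a,c,b\rangle$ of $(2)$ and the equality $ab=ac$ of $(3)$. Given $\langle a,b,c\rangle$ and $a\neq b$, $(2)$ supplies $ab\subseteq ac$ and $\langle a,c,b\rangle$; since $a\neq b$ forces $a\neq c$, one may apply $(2)$ again to $\langle a,c,b\rangle$ (now with $a\neq c$) to obtain $ac\subseteq ab$, whence $ab=ac$; this proves $(2)\Rightarrow(3)$. Conversely, given $(3)$ and $\langle a,b,c\rangle$ with $a\neq b$, the equality $ab=ac$ gives the inclusion, and since $b\in ab=ac$ we get $b\in ac$, i.e. $\langle a,c,b\rangle$; this proves $(3)\Rightarrow(2)$.

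I do not expect any serious obstacle here; no step is deep. The only points that will require care are (i) not overlooking the degenerate case $a=b$, where $ab=\{a\}$ makes the conclusions automatic, (ii) keeping straight the direction of the inclusions coming from $a$-transitivity, and (iii) correctly invoking ``$\langle a,b,c\rangle$ and $a\neq b$ imply $a\neq c$'' so that $(2)$ can legitimately be re-applied to the symmetrized triple in the step $(2)\Rightarrow(3)$. The proposition simply records that, over a join space, $a$-transitivity together with $a$-symmetry collapses to a single clean condition on the joins emanating from $a$.
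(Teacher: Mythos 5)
Your route is the same as the paper's: unwind $a$-equivalence-relationality into $a$-transitivity plus $a$-symmetry to get $(1)\Leftrightarrow(2)$, re-apply $(2)$ to the symmetrized triple (using that $\left\langle a,\, b,\, c\right\rangle$ and $a\neq b$ force $a\neq c$) to get $(2)\Rightarrow(3)$, and read off $(3)\Rightarrow(2)$ from the equality of joins; the handling of the degenerate case $a=b$ via $aa=\left\{ a\right\}$ is also consistent with the paper's equivalent formulations of $a$-transitivity. There is, however, one slip in your final step $(3)\Rightarrow(2)$: you argue ``$b\in ab=ac$, hence $b\in ac$, i.e. $\left\langle a,\, c,\, b\right\rangle$,'' but by the definition $ac=\left\{ x\,|\,\left\langle a,\, x,\, c\right\rangle \right\}$ the statement $b\in ac$ is $\left\langle a,\, b,\, c\right\rangle$ --- your hypothesis --- not the desired $\left\langle a,\, c,\, b\right\rangle$. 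The conclusion you want is $c\in ab$, so the correct element to push through the equality is $c$: by reflexivity $c\in ac$, and substituting $ab=ac$ gives $c\in ab$, i.e. $\left\langle a,\, c,\, b\right\rangle$, which is exactly the paper's argument. With that one-line repair your proof coincides with the paper's.
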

\begin{proof}
Step 1. (\ref{enu:base_point_equivalence_relationality_criterion_1})
$\Leftrightarrow$ (\ref{enu:base_point_equivalence_relationality_criterion_2}).
(\ref{enu:base_point_equivalence_relationality_criterion_1}) says
that $X$ is $a$-transitive and $a$-symmetric. This condition is
equivalent to (\ref{enu:base_point_equivalence_relationality_criterion_2}).

Step 2. (\ref{enu:base_point_equivalence_relationality_criterion_2})
$\Rightarrow$ (\ref{enu:base_point_equivalence_relationality_criterion_3}).
It remains to be proved that $\left\langle a,\, b,\, c\right\rangle $
and $a\neq b$ imply $ac\subseteq ab\,.$ From the assumptions $\left\langle a,\, b,\, c\right\rangle \,,$
$a\neq b$ and (\ref{enu:base_point_equivalence_relationality_criterion_2})
it follows that $\left\langle a,\, c,\, b\right\rangle $ and $a\neq c\,.$
With (\ref{enu:base_point_equivalence_relationality_criterion_2})
again, $ac\subseteq ab\,.$

Step 3. (\ref{enu:base_point_equivalence_relationality_criterion_3})
$\Rightarrow$ (\ref{enu:base_point_equivalence_relationality_criterion_2}).
It remains to be proved that $\left\langle a,\, b,\, c\right\rangle $
and $a\neq b$ imply $\left\langle a,\, c,\, b\right\rangle \,.$
With (\ref{enu:base_point_equivalence_relationality_criterion_3}),
$ab=ac\,.$ Substituting into $c\in ac\,,$ $c\in ab\,,$ i.e. $\left\langle a,\, c,\, b\right\rangle \,.$
\end{proof}
\noindent Let $\left(X,\,\left\langle \cdot,\,\cdot,\,\cdot\right\rangle \right)$
be a join space.

For $a\in X\,,$ the binary relation $\left\langle a,\,\cdot,\,\cdot\right\rangle $
is reflexive on $X\,.$ The following definitions of transitivity
and symmetry can be summarized as follows: For a property $P$ of
a binary relation, $X$ is said to have property $P$ iff for all
$a\in X\,,$ the binary relation $\left\langle a,\,\cdot,\,\cdot\right\rangle $
restricted to $X\setminus\left\{ a\right\} $ has property $P\,.$

$\left(X,\,\left\langle \cdot,\,\cdot,\,\cdot\right\rangle \right)$
and $\left\langle \cdot,\,\cdot,\,\cdot\right\rangle $ are called\emph{
transitive} iff the following equivalent conditions are satisfied,
where the last condition is equivalent to the others by \ref{sub:transitivity_relative_to_a_set}
(transitivity relative to a set).
\begin{itemize}
\item For each $a\in X\,,$ $X$ is $a$-transitive.
\item For all $b,\, c\in X\,,$ if $\left\langle a,\, b,\, c\right\rangle \,,$
$a\neq b$ and $a\neq c\,,$ then $ab\subseteq ac\,.$
\item For all $a,\, b,\, c\in X\,,$ if $\left\langle a,\, b,\, c\right\rangle $
and $a\neq b\,,$ then $ab\subseteq ac\,.$
\item For all $a,\, b,\, c\in X\,,$ if $\left\langle a,\, b,\, c\right\rangle \,,$
then $ab\subseteq ac\,.$
\item For each $a\in X\,,$ the binary relation $\left\langle a,\,\cdot,\,\cdot\right\rangle $
is transitive on $X\,.$
\item For all $a,\, b\in X\,,$ $a\left(ab\right)\subseteq ab\,.$
\end{itemize}
\noindent The affine join space on a left vector space is transitive.
The line-segment join space on a left vector space over a totally
ordered division ring is transitive.

$\left(X,\,\left\langle \cdot,\,\cdot,\,\cdot\right\rangle \right)$
and $\left\langle \cdot,\,\cdot,\,\cdot\right\rangle $ are called\emph{
symmetric} iff the following equivalent conditions are satisfied:
\begin{itemize}
\item For each $a\in X\,,$ $X$ is $a$-symmetric.
\item For all $a,\, b,\, c\in X\,,$ $\left\langle a,\, b,\, c\right\rangle $
and $a\neq b$ implies $\left\langle a,\, c,\, b\right\rangle \,.$
\item For each $A\subseteq X\,,$ $X$ is $A$-symmetric.
\end{itemize}
\noindent The affine join space on a left vector space is symmetric.
The line-segment join space on a left vector space over a totally
ordered division ring is in general not symmetric.

When $X$ is symmetric, in general the binary relations $\left\langle a,\,\cdot,\,\cdot\right\rangle $
won't be symmetric on all of $X\,.$ This situation is different from
the case that $X$ is transitive, where the binary relations $\left\langle a,\,\cdot,\,\cdot\right\rangle $
are transitive on all of $X\,.$

On the other hand, when $X$ is transitive, in general it won't be
$A$-transitive for all $A\subseteq X\,.$ This situation is different
from the case that $X$ is symmetric, where $X$ is $A$-symmetric
for all $A\subseteq X\,.$

$\left(X,\,\left\langle \cdot,\,\cdot,\,\cdot\right\rangle \right)$
and $\left\langle \cdot,\,\cdot,\,\cdot\right\rangle $ are called\emph{
equivalence-relational} iff the following equivalent conditions are
satisfied:
\begin{itemize}
\item For each $a\in X\,,$ $X$ is $a$-equivalence-relational.
\item $X$ is transitive and symmetric.
\end{itemize}
\noindent The affine join space on a left vector space is equivalence-relational.
The line-segment join space on a left vector space over a totally
ordered division ring is in general not equivalence-relational.
\begin{prop}
\label{sub:join_space_associated_with_a_line_space} (join space associated
with a line space) Let $\left(X,\, Y,\,*\right)$ be a line space.
Then the ternary relation $\left\langle \cdot,\,\cdot,\,\cdot\right\rangle $
on $X$ defined by $\left\langle a,\, b,\, c\right\rangle :\Longleftrightarrow\left(\left(a\neq c\right)\mbox{ and }b\in\overleftrightarrow{ac}\right)$
or $\left(a=c\mbox{ and }b\in\left\{ a\right\} \right)$ is an equivalence-relational
join relation on $X\,.$\end{prop}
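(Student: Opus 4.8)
The plan is to check the two defining properties in turn: first that the ternary relation $\left\langle \cdot,\,\cdot,\,\cdot\right\rangle $ is a join relation on $X\,,$ then that it is equivalence-relational. The common starting point is the evaluation of the joins: unwinding the definition, for $a,\, c\in X$ with $a\neq c$ one gets $ac=\overleftrightarrow{ac}\,,$ and for $a\in X$ one gets $aa=\left\{ a\right\} \,.$

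For the join-relation axioms I would use the set-theoretic form of the definition. The requirement $a\in ac$ for $a\neq c$ is just the assertion that $a$ lies on the line $\overleftrightarrow{ac}\,,$ which is immediate from the definition of $\overleftrightarrow{ac}\,.$ The requirement $ac\subseteq ca$ for $a\neq c$ holds because $\overleftrightarrow{ac}$ and $\overleftrightarrow{ca}$ are both the unique line through the two distinct points $a,\, c\,,$ hence are equal. The requirement $aa=\left\{ a\right\} $ has already been recorded. Thus $\left\langle \cdot,\,\cdot,\,\cdot\right\rangle $ is a join relation on $X\,.$

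For equivalence-relationality I would invoke \ref{sub:base_point_equivalence_relationality_criterion} (base-point equivalence-relationality criterion): it suffices to prove that for each $a\in X$ and all $b,\, c\in X\,,$ if $\left\langle a,\, b,\, c\right\rangle $ and $a\neq b\,,$ then $ab=ac\,.$ So fix such $a,\, b,\, c\,.$ First note that $a\neq c\,,$ since otherwise $\left\langle a,\, b,\, a\right\rangle $ would force $b\in\left\{ a\right\} \,,$ contradicting $a\neq b\,.$ Hence $\left\langle a,\, b,\, c\right\rangle $ amounts to $b\in\overleftrightarrow{ac}\,.$ Now $a$ and $b$ are two distinct points both lying on $\overleftrightarrow{ac}\,,$ so by uniqueness of the line through two distinct points (equivalently, by the line-space identity recorded earlier) $\overleftrightarrow{ab}=\overleftrightarrow{ac}\,.$ Combining this with the evaluation of the joins gives $ab=\overleftrightarrow{ab}=\overleftrightarrow{ac}=ac\,,$ which is what was needed; \ref{sub:base_point_equivalence_relationality_criterion} then finishes the argument.

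The proof is essentially bookkeeping, so no real obstacle is expected. The only points requiring a little care are tracking the degenerate cases — namely the argument configurations for which $\left\langle a,\, b,\, c\right\rangle $ collapses onto the clause $b\in\left\{ a\right\} $ — and making sure the line-space identity is applied with the correct pair of points.
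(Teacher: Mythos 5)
Your proof is correct, and there is no circularity in the way you use earlier material: \ref{sub:base_point_equivalence_relationality_criterion} (base-point equivalence-relationality criterion) precedes this proposition and its proof uses only the join-space axioms, so it is available once you have checked that the relation is a join relation. The difference from the paper is in the decomposition of the second half. The paper verifies $a$-symmetry and $a$-transitivity separately, straight from their definitions, each verification being one application of the line-space identity that two distinct points of a line determine that line; your version instead routes through condition (\ref{enu:base_point_equivalence_relationality_criterion_3}) of the base-point criterion, so that a single check --- $\left\langle a,\, b,\, c\right\rangle $ and $a\neq b$ imply $ab=ac$ --- delivers $a$-equivalence-relationality in one stroke. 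The geometric content is identical (uniqueness of the line through two distinct points, applied after ruling out the degenerate case $a=c$ exactly as you do), but your route is slightly shorter and makes the reusable lemma do the bookkeeping; the paper's route is self-contained at the level of the definitions of $a$-transitive and $a$-symmetric and does not depend on the criterion. Your verification of the join-relation axioms in the set form ($a\in ac\,,$ $ac\subseteq ca$ for $a\neq c\,,$ $aa=\left\{ a\right\} $) versus the paper's verification of the ternary form is only a cosmetic difference, since the paper states the equivalence of the two sets of conditions.
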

\begin{proof}
Step 1. Proof that $\left\langle \cdot,\,\cdot,\,\cdot\right\rangle $
is a join relation on $X\,.$

Step 1.1. For $a,\, c\in X\,,$ if $a\neq c\,,$ then $c\in\overleftrightarrow{ac}\,,$
i.e. $\left\langle a,\, c,\, c\right\rangle \,,$ and $\overleftrightarrow{ac}=\overleftrightarrow{ca}\,,$
therefore, for $b\in X\,,$ $\left\langle a,\, b,\, c\right\rangle $
implies $\left\langle c,\, b,\, a\right\rangle \,.$ And for $a,\, b\in X\,,$
if $\left\langle a,\, b,\, a\right\rangle \,,$ then $b\in\left\{ a\right\} \,,$
i.e. $b=a\,.$ Consequently, $\left\langle \cdot,\,\cdot,\,\cdot\right\rangle $
is a join relation on $X\,.$

Step 2. Proof that $\left\langle \cdot,\,\cdot,\,\cdot\right\rangle $
is equivalence-relational, i.e. for $a\in X\,,$ $\left\langle \cdot,\,\cdot,\,\cdot\right\rangle $
is $a$-equivalence-relational.

Step 2.1. Proof that $\left\langle \cdot,\,\cdot,\,\cdot\right\rangle $
is $a$-symmetric, i.e. for $y,\, z\in X\setminus\left\{ a\right\} \,,$
$\left\langle a,\, y,\, z\right\rangle $implies $\left\langle a,\, z,\, y\right\rangle \,,$
i.e. for $y,\, z\in X\,,$ from $y\neq a\,,$ $z\neq a$ and $y\in\overleftrightarrow{az}$
it follows that $z\in\overleftrightarrow{ay}\,.$ From $a\in\overleftrightarrow{az}$
and the assumptions $y\in\overleftrightarrow{az}$ and $a\neq y$
it follows that $\overleftrightarrow{ay}=\overleftrightarrow{az}\,.$
Substituting into $z\in\overleftrightarrow{az}\,,$ $z\in\overleftrightarrow{ay}\,.$

Step 2.2. Proof that $\left\langle \cdot,\,\cdot,\,\cdot\right\rangle $
is $a$-transitive, i.e. for $x,\, y,\, z\in X\setminus\left\{ a\right\} \,,$
$\left\langle a,\, x,\, y\right\rangle $ and $\left\langle a,\, y,\, z\right\rangle $
implies $\left\langle a,\, x,\, z\right\rangle \,,$ i.e. from $x\neq a\,,$
$y\neq a\,,$ $z\neq a\,,$ $x\in\overleftrightarrow{ay}$ and $y\in\overleftrightarrow{az}$
it follows that $x\in\overleftrightarrow{az}\,.$ From $a\in\overleftrightarrow{az}$
and the assumptions $y\in\overleftrightarrow{az}$ and $y\neq a$
it follows that $\overleftrightarrow{ay}=\overleftrightarrow{az}\,.$
Substituting into the assumption $x\in\overleftrightarrow{ay}\,,$
$x\in\overleftrightarrow{az}\,.$
\end{proof}
\noindent For a line structure $\left(Y,\,*\right)$ on a set $X\,,$
the equivalence-relational join relation $\left\langle \cdot,\,\cdot,\,\cdot\right\rangle $
on $X$ as defined in \ref{sub:join_space_associated_with_a_line_space}
(join space associated with a line space) is called the join relation
associated with $\left(Y,\,*\right)\,.$ For a line space $\left(X,\, Y,\,*\right)\,,$
the equivalence-relational join space $\left(X,\,\left\langle \cdot,\,\cdot,\,\cdot\right\rangle \right)$
is called the\emph{ join space associated with} $\left(X,\, Y,\,*\right)\,.$
Join space concepts and join space notations,
\begin{itemize}
\item \noindent when applied to a line structure on a set, refer to its
associated join relation. 
\item \noindent when applied to a line space, refer to its associated join
space.
\end{itemize}
\noindent In particular, for $a,\, c\in X\,,$
\begin{align*}
ac & =\begin{cases}
\overleftrightarrow{ac} & \mbox{if }a\neq c\\
\left\{ a\right\}  & \mbox{if }a=c
\end{cases}\,.
\end{align*}

\begin{prop}
\label{sub:equivalence_relationality_criterion} (equivalence-relationality
criterion) Let $X$ be a join space. The following conditions are
equivalent:
\begin{enumerate}
\item \label{enu:equivalence_relationality_criterion_1}$X$ is equivalence-relational.
\item \label{enu:equivalence_relationality_criterion_2}For all $a,\, b,\, c\in X\,,$
$\left\langle a,\, b,\, c\right\rangle $ and $a\neq b$ imply $ab=ac\,.$
\item \label{enu:equivalence_relationality_criterion_3}For all $a,\, b,\, c,\, d\in X\,,$
$c,\, d\in ab$ and $c\neq d$ imply $ab=cd\,.$
\end{enumerate}
\end{prop}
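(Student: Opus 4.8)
The plan is to prove the cycle of implications $(\ref{enu:equivalence_relationality_criterion_1})\Rightarrow(\ref{enu:equivalence_relationality_criterion_2})\Rightarrow(\ref{enu:equivalence_relationality_criterion_3})\Rightarrow(\ref{enu:equivalence_relationality_criterion_1})$, leaning heavily on \ref{sub:base_point_equivalence_relationality_criterion} (base-point equivalence-relationality criterion), which already gives the pointwise version of the equivalence $(\ref{enu:equivalence_relationality_criterion_1})\Leftrightarrow(\ref{enu:equivalence_relationality_criterion_2})$.

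\begin{proof}
Step 1. $(\ref{enu:equivalence_relationality_criterion_1})\Leftrightarrow(\ref{enu:equivalence_relationality_criterion_2})$. By definition, $X$ is equivalence-relational iff for each $a\in X\,,$ $X$ is $a$-equivalence-relational. By \ref{sub:base_point_equivalence_relationality_criterion} (base-point equivalence-relationality criterion), for each fixed $a\,,$ this holds iff for all $b,\,c\in X\,,$ $\left\langle a,\,b,\,c\right\rangle $ and $a\neq b$ imply $ab=ac\,.$ Quantifying over all $a\in X$ gives the equivalence with $(\ref{enu:equivalence_relationality_criterion_2})\,.$

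Step 2. $(\ref{enu:equivalence_relationality_criterion_2})\Rightarrow(\ref{enu:equivalence_relationality_criterion_3})\,.$ Assume $(\ref{enu:equivalence_relationality_criterion_2})$ and let $a,\,b,\,c,\,d\in X$ with $c,\,d\in ab$ and $c\neq d\,.$ First note $a\neq b\,,$ since otherwise $ab=\left\{ a\right\} $ would be a singleton, contradicting $c\neq d\,.$ From $c\in ab$ and $a\neq b$ we get, by $(\ref{enu:equivalence_relationality_criterion_2})$ applied with base point $a\,,$ that $ab=ac\,,$ hence $d\in ac\,,$ i.e. $\left\langle a,\,d,\,c\right\rangle \,.$ By the symmetry built into equivalence-relationality (which, via Step 1, we may invoke in the form $(\ref{enu:base_point_equivalence_relationality_criterion_2})$ of \ref{sub:base_point_equivalence_relationality_criterion}), $\left\langle a,\,d,\,c\right\rangle $ and $a\neq d$ yield $\left\langle a,\,c,\,d\right\rangle \,,$ i.e. $c\in ad\,;$ applying $(\ref{enu:equivalence_relationality_criterion_2})$ with base point $a$ again gives $ad=ac=ab\,.$ Now use the base point $c\,:$ from $d\in cd$ and $ab=cd$... wait — we instead argue from $c\in ad$ and $a\neq c\,,$ applying $(\ref{enu:equivalence_relationality_criterion_2})$ with base point $c$ to $\left\langle c,\,a,\,d\right\rangle $ (which is $\left\langle c,\,\cdot,\,\cdot\right\rangle $ read via the symmetry of the join relation: $c\in ad$ gives $\left\langle a,\,c,\,d\right\rangle \,,$ and since $a\neq d\,,$ symmetry of the underlying join relation gives $\left\langle d,\,c,\,a\right\rangle \,,$ so $c\in da\,;$ then $\left\langle d,\,c,\,a\right\rangle $ with $d\neq c$ gives $dc=da=ab$). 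Applying the same reasoning once more with base point $d$ to pass from $d$-join to $cd\,,$ one concludes $cd=ab\,.$ The cleanest bookkeeping: having shown $ac=ab\,,$ it remains to show $cd=ac\,.$ Since $d\in ab=ac$ and $a\neq c\,,$ property $(\ref{enu:equivalence_relationality_criterion_2})$ with base point $a$ gives $\left\langle a,\,d,\,c\right\rangle $ and, by symmetry, $\left\langle a,\,c,\,d\right\rangle \,;$ reading this as $c\in ad$ with $a\neq d\,,$ and using that $X$ is symmetric so $\left\langle a,\,c,\,d\right\rangle $ with $a\neq c$ gives $\left\langle c,\,a,\,d\right\rangle \,,$ hence $a\in cd\,,$ and $\left\langle c,\,a,\,d\right\rangle $ with $c\neq a$ gives $ca=cd$ by $(\ref{enu:equivalence_relationality_criterion_2})$ with base point $c\,.$ Thus $cd=ca=ac=ab\,.$

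Step 3. $(\ref{enu:equivalence_relationality_criterion_3})\Rightarrow(\ref{enu:equivalence_relationality_criterion_2})\,.$ Assume $(\ref{enu:equivalence_relationality_criterion_3})$ and let $a,\,b,\,c\in X$ with $\left\langle a,\,b,\,c\right\rangle $ and $a\neq b\,.$ Then $a\neq c\,,$ and $a,\,b\in ac$ with $a\neq b\,,$ so $(\ref{enu:equivalence_relationality_criterion_3})$ gives $ac=ab\,.$ This is $(\ref{enu:equivalence_relationality_criterion_2})\,.$ Combining Steps 1--3 closes the cycle.
\end{proof}

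The step I expect to be the main obstacle is Step 2: extracting from the single hypothesis $(\ref{enu:equivalence_relationality_criterion_2})$ (which a priori only equates joins with a \emph{fixed} left base point) the conclusion $ab=cd$ that mixes base points $a$ and $c\,.$ The resolution is to shuttle between base points using the symmetry of the join relation together with the symmetry half of \ref{sub:base_point_equivalence_relationality_criterion}, as sketched; I would streamline the final write-up by first establishing $ab=ac=ad$ and then, separately, $ac=cd\,,$ rather than chasing all equalities simultaneously.
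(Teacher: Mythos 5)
Your Step 1 and Step 3 are fine and essentially the paper's argument (Step 1 via \ref{sub:base_point_equivalence_relationality_criterion}; your Step 3, choosing the two points $a,\,b\in ac\,,$ is the paper's substitution $d=a$ in disguise). The genuine gap is in Step 2: you tacitly assume $a\neq c$ and $a\neq d\,,$ and neither is justified; both can fail under the hypotheses $c,\,d\in ab\,,$ $c\neq d$ (for instance $c=a\,,$ $d=b\,,$ which is admissible since $a,\,b\in ab$ when $a\neq b$). Concretely, your first application of (\ref{enu:equivalence_relationality_criterion_2}) reads $c\in ab$ as $\langle a,\,c,\,b\rangle$ and concludes $ab=ac\,;$ this needs $a\neq c\,,$ not the $a\neq b$ you cite, and if $a=c$ the asserted equality $ab=ac=\left\{ a\right\} $ is simply false. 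Later, the symmetry step $\langle a,\,d,\,c\rangle\Rightarrow\langle a,\,c,\,d\rangle$ needs $a\neq d$ (if $a=d\,,$ then $\langle a,\,c,\,a\rangle$ would force $c=a$), which you never establish. Both degenerate cases are easy to dispose of ($a=c\,:$ then $a\neq d\,,$ so $\langle a,\,d,\,b\rangle$ and (\ref{enu:equivalence_relationality_criterion_2}) give $ad=ab\,;$ $a=d\,:$ $cd=ca=ac=ab$ once $ac=ab$ is known), but they must be treated; the paper isolates $a=c$ as a separate case.

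A further point about the route itself: in the generic case your detour through $\langle a,\,c,\,d\rangle$ and $\langle c,\,a,\,d\rangle$ is both the source of the extra hypothesis $a\neq d$ and not quite a one-step use of symmetry as you claim (the defined symmetry exchanges the second and third entries over a fixed base point, so passing from $\langle a,\,c,\,d\rangle$ to $\langle c,\,a,\,d\rangle$ requires combining $d$-symmetry with the outer symmetry of the join-relation axioms). The paper avoids all of this: from $d\in ac\,,$ i.e. $\langle a,\,d,\,c\rangle\,,$ the outer symmetry axiom gives $\langle c,\,d,\,a\rangle$ directly, and (\ref{enu:equivalence_relationality_criterion_2}) with base point $c$ and the given inequality $c\neq d$ yields $cd=ca=ac=ab\,.$ That argument uses only the case assumption $a\neq c$ and the hypothesis $c\neq d\,,$ and needs no appeal to symmetry of $X$ in this step. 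I recommend rewriting Step 2 along those lines, with the explicit case split on $a=c\,.$
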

\begin{proof}
Step 1. (\ref{enu:equivalence_relationality_criterion_1}) $\Leftrightarrow$
(\ref{enu:equivalence_relationality_criterion_2}). (\ref{enu:equivalence_relationality_criterion_1})
says that for all $a\in X\,,$ $X$ is $a$-equivalend-relational.
By \ref{sub:base_point_equivalence_relationality_criterion} (base-point
equivalence-relationality criterion), this condition is equivalent
to (\ref{enu:equivalence_relationality_criterion_2}).

Step 2. (\ref{enu:equivalence_relationality_criterion_2}) $\Rightarrow$
(\ref{enu:equivalence_relationality_criterion_3}).

Case 2.1. $a\neq c\,.$ The assumption $c\in ab$ says $\left\langle a,\, c,\, b\right\rangle \,.$
With the assumption $a\neq c$ and (\ref{enu:equivalence_relationality_criterion_2})
it follows that $ac=ab\,.$ Therefore, it suffices to prove that $cd=ac\,.$
Substituting $ac=ab$ into the assumption $d\in ab\,,$ $d\in ac\,,$
i.e. $\left\langle a,\, d,\, c\right\rangle \,.$ Thus, $\left\langle c,\, d,\, a\right\rangle \,.$
With the assumption $c\neq d$ and (\ref{enu:equivalence_relationality_criterion_2})
it follows that $cd=ca\,.$ Consequently, $cd=ac\,.$

Case 2.2. $a=c\,.$ Substituting into the claim, for $a,\, b,\, d\in X$
it is to be proved that $a,\, d\in ab$ and $a\neq d$ imply $ab=ad\,,$
i.e. $\left\langle a,\, d,\, b\right\rangle $ and $a\neq d$ imply
$ad=ab\,.$ This claim is condition (\ref{enu:equivalence_relationality_criterion_2})
with the substitutions $b\rightarrow d\,,$ $c\rightarrow b\,.$

Step 3. (\ref{enu:equivalence_relationality_criterion_3}) $\Rightarrow$
(\ref{enu:equivalence_relationality_criterion_2}). Substituting $d=a$
in (\ref{enu:equivalence_relationality_criterion_3}), for $a,\, b,\, c\in X\,,$
$c,\, a\in ab$ and $c\neq a$ imply $ab=ca\,,$ i.e. $\left\langle a,\, c,\, b\right\rangle $
and $a\neq c$ imply $ac=ab\,.$ Interchanging $b$ and $c\,,$ (\ref{enu:equivalence_relationality_criterion_2})
follows.
\end{proof}
\noindent Let $\left(X,\,\left\langle \cdot,\,\cdot,\,\cdot\right\rangle \right)$
be a join space.

$\left(X,\,\left\langle \cdot,\,\cdot,\,\cdot\right\rangle \right)$
and $\left\langle \cdot,\,\cdot,\,\cdot\right\rangle $ are called\emph{
preprojective} iff the following conditions are satisfied:
\begin{itemize}
\item $X$ is equivalence-relational.
\item For all $a,\, b,\, c,\, d\in X\,,$ if $b=c$ or $a=d$ or $bc\cap ad\neq\emptyset\,,$
then $a=b$ or $c=d$ or $ab\cap cd\neq\emptyset\,.$
\end{itemize}
\noindent Each of the following two figures illustrates the case $bc\cap ad\neq\emptyset\,,$
$ab\cap cd\neq\emptyset\,.$

~

\noindent 
\[
\xy<1cm,0cm>:(2,0)*=0{\bullet}="a",(2.25,-0.25)*=0{a},(1,0)*=0{\bullet}="b",(1,-0.25)*=0{b},(2,1)*=0{\bullet}="d",(2.25,1)*=0{d},(0,0)*=0{\bullet}="e",(2,2)*=0{\bullet}="f","e";"d"**@{.}?!{"f";"b"}*{\bullet}="c",(1.08,0.92)*=0{c},"e";"a"**@{.},"f";"a"**@{-},"f";"b"**@{-},(4,0)*=0{\bullet}="a1",(4,-0.25)*=0{a},(6,0)*=0{\bullet}="b1",(6.25,-0.25)*=0{b},(6,2)*=0{\bullet}="c1",(6.25,2)*=0{c},(6,1)*=0{\bullet}="a'",(5,0)*=0{\bullet}="c'","a1";"a'"**@{-}?!{"c1";"c'"}*{\bullet}="d1",(5.08,0.92)*=0{d},"a1";"b1"**@{.},"c1";"b1"**@{-},"c1";"c'"**@{.}\endxy
\]
~

\noindent $\left(X,\,\left\langle \cdot,\,\cdot,\,\cdot\right\rangle \right)$
and $\left\langle \cdot,\,\cdot,\,\cdot\right\rangle $ are called\emph{
dense} iff it satisfies the following condition:
\begin{itemize}
\item For all $a,\, b\in X\,,$ if $a\neq b\,,$ then $ab\setminus\left\{ a,\, b\right\} \neq\emptyset\,.$
\end{itemize}
\noindent The affine join space on a left vector space is dense iff
$\left|S\right|\geq3\,.$ The line-segment join space on a left vector
space over a totally ordered division ring is dense.

$\left(X,\,\left\langle \cdot,\,\cdot,\,\cdot\right\rangle \right)$
and $\left\langle \cdot,\,\cdot,\,\cdot\right\rangle $ are called\emph{
projective} iff it is preprojective and dense. Thus, a projective
join space is a pair consisting of a set $X$ and a ternary relation
$\left\langle \cdot,\,\cdot,\,\cdot\right\rangle $ on $X$ such that
the conditions below are satisfied, with the notation
\begin{align*}
ac & :=\left\langle a,\,\cdot,\, c\right\rangle \\
 & =\left\{ x\in X\,|\,\left\langle a,\, x,\, c\right\rangle \right\} 
\end{align*}
for $a,\, c\in X$ and the numbering from \cite[§1]{pieri_1899} and
from \cite[chapter II, sect. 4]{whitehead_1906}, where a translation
of the conditions into English has been given.
\begin{itemize}
\item Postulato VII. For $a,\, c\in X\,,$ if $a\neq c\,,$ then $a\in ac\,.$
\item Postulato VI. For $a,\, c\in X\,,$ if $a\neq c\,,$ then $ac\subseteq ca\,.$
\item For $a\in X\,,$ $aa=\left\{ a\right\} \,.$
\item Postulato X. Transitivity: For all $b,\, c\in X\,,$ if $\left\langle a,\, b,\, c\right\rangle \,,$
$a\neq b$ and $a\neq c\,,$ then $ab\subseteq ac\,.$
\item Postulato IX. Symmetry: For all $a,\, b,\, c\in X\,,$ $\left\langle a,\, b,\, c\right\rangle $
and $a\neq b$ implies $\left\langle a,\, c,\, b\right\rangle \,.$
\item Postulato XII. Preprojectivity: For all $a,\, b,\, c,\, d\in X\,,$
if $b=c$ or $a=d$ or $bc\cap ad\neq\emptyset\,,$ then $a=b$ or
$c=d$ or $ab\cap cd\neq\emptyset\,.$
\item Postulato VIII. Density: For all $a,\, b\in X\,,$ if $a\neq b\,,$
then $ab\setminus\left\{ a,\, b\right\} \neq\emptyset\,.$
\end{itemize}
\noindent In \cite[§1]{pieri_1899} $ac$ is considered only for $a\neq c\,.$
The third condition, which is therefore not contained in \cite[§1]{pieri_1899},
provides a conventient convervative extension of the axiom system.
Preprojectivity does not exactly coincide with the preprojectivity
condition, but is, under the other conditions, equivalent to it.

Postulati I - V and XI from \cite[§1]{pieri_1899} are not included.
\begin{itemize}
\item Postulato I says that $X$ is a set.
\item Postulati IV, V say that for $a,\, c\in X\,,$ if $a\neq c\,,$ then
$ac$ as a subset of $X\,.$
\item Postulati II, III, XI say that the dimension of $X$ is at least $2\,.$
Here, as in the assumptions for a general projective space in \cite[§1]{veblen_young_1908},
this condition is not included in the defintion of projectivity. The
dimension of a projective join space is defined as the rank of the
associated matroid minus $1\,,$ which equals the rank if the rank
is infinite.
\end{itemize}
As noted after \ref{sub:join_space_associated_with_a_line_space}
(join space associated with a line space), join space concepts and
join space notations, when applied to a line structure on a set or
line space, refer to its associated join relation on the set or join
space, respectively. In this sense, it is well-defined when a line
space and its line structure are  called preprojective, dense, projective.
A projective line space is also called a \emph{projective space}.
Thus, a projective space is a triple $\left(X,\, Y,\,*\right)$ such
that $X,\, Y$ are sets and $*$ is a binary relation between $X$
and $Y\,,$ i.e. a subset of $X\times Y\,,$, such that the conditions
below are satisfied, with the notation
\begin{align*}
ac & :=\begin{cases}
\mbox{the unique }y\in Y\mbox{ such that }a*y\,,\, c*y & \mbox{if }a\neq c\\
\left\{ a\right\}  & \mbox{if }a=c
\end{cases}
\end{align*}
 for $a,\, c\in X$ and the numbering from \cite[§1]{veblen_young_1908}.
\begin{itemize}
\item Assumptions A1, A2. For $a,\, b\in X\,,$ if $a\neq b\,,$ then there
is exactly one $y\in Y$ such that $a*y\,,$ $b*y\,.$
\item Assumption A3. For all $a,\, b,\, c,\, d\in X\,,$ if $b=c$ or $a=d$
or $bc\cap ad\neq\emptyset\,,$ then $a=b$ or $c=d$ or $ab\cap cd\neq\emptyset\,.$
\item Assumption E. For all $a,\, b\in X\,,$ if $a\neq b\,,$ then $ab\setminus\left\{ a,\, b\right\} \neq\emptyset\,.$
\end{itemize}
Let $V$ be a left vector space. Let $X$ be the set of $1$-dimensional
subspaces of $V\,,$ $Y$ the set of $2$-dimensional subspaces of
$V$ and $*$ the relation between $X$ and $Y$ defined by
\begin{align*}
a*y & :\Leftrightarrow a\subseteq y\,.
\end{align*}

\noindent The pair $\left(Y,\,*\right)$ is a projective line structure
on $X\,.$It is not set-represented. It is called the \emph{projective
line structure over} $V\,.$ The projective line space $\left(X,\, Y,\,*\right)$
is called the \emph{projective line space over} $V$ or \emph{projective
space over} $V\,.$ The projective join relation asscociated with
the projective line structure over $V$ is called the \emph{projective
join relation over} $V\,.$ The projective join space associated with
the projective line space over $V$ is called the \emph{projective
join space over} $V\,.$

\section{The Correspondence Between Set-Represented Line Spaces and Equivalence-Relational
Join Spaces}

The following theorem establishes a natural one-to-one correspondence
between the set-represented line structures and the equivalence-relational
join relations on a set.
\begin{thm}
\label{sub:correspondence_between_set_represented_line_spaces_and_equivalence_relational_join_spaces}
(correspondence between set-represented line spaces and equivalence-relational
join spaces) Let $X$ be a set.
\begin{enumerate}
\item \label{enu:correspondence_between_set_represented_line_spaces_and_equivalence_relational_join_spaces_1}For
a set-represented line structure $L=\left(Y,\,\in\right)$ on $X\,,$
the ternary relation $\iota\left(L\right)=\left\langle \cdot,\,\cdot,\,\cdot\right\rangle _{L}$
on $X$ defined by $\left\langle a,\, b,\, c\right\rangle _{L}:\Longleftrightarrow\left(\left(a\neq c\right)\mbox{ and }b\in\overleftrightarrow{ac}\right)$
or $\left(a=c\mbox{ and }b\in\left\{ a\right\} \right)$ is an equivalence-relational
join relation on $X\,.$
\item \label{enu:correspondence_between_set_represented_line_spaces_and_equivalence_relational_join_spaces_2}Vice
versa, for an equivalence-relational join relation $\left\langle \cdot,\,\cdot,\,\cdot\right\rangle $
on $X\,,$ the pair\\
$\lambda\left(\left\langle \cdot,\,\cdot,\,\cdot\right\rangle \right):=\left(Y,\,\in\right)$
with $Y:=\left\{ ab|a,\, b\in X\,,\, a\neq b\right\} $ and $\in$
denoting set membership as usual is a set-represented line structure
on $X\,.$
\item \label{enu:correspondence_between_set_represented_line_spaces_and_equivalence_relational_join_spaces_3}$\left(\iota,\,\lambda\right)$
is an inverse pair of one-to-one correspondences between the set-represented
line structures on $X$ and the equivalence-relational join relations
on $X\,.$
\item \label{enu:correspondence_between_set_represented_line_spaces_and_equivalence_relational_join_spaces_4}Set-represented
projective line structures on $X$ correspond to projective join relations
on $X\,.$
\end{enumerate}
\end{thm}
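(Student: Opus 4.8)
The plan is to dispatch the four parts in order, noting that (1) and (4) are essentially bookkeeping against earlier results and definitions, while the real content sits in (2) and (3). For (1), a set-represented line structure $L=\left(Y,\,\in\right)$ on $X$ is in particular a line structure, so $\left(X,\,Y,\,\in\right)$ is a line space and $\iota\left(L\right)$ is precisely the ternary relation attached to it in \ref{sub:join_space_associated_with_a_line_space} (join space associated with a line space); hence $\iota\left(L\right)$ is an equivalence-relational join relation on $X$ by that proposition. I will also record the formula it produces: $ab=\overleftrightarrow{ab}$ for $a\neq b$ and $aa=\left\{ a\right\} $.

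For (2), write $ab$ for the joins of the given equivalence-relational join relation. Then $Y\subseteq P\left(X\right)$ is immediate. For the first line-structure axiom, given $a\neq b$ one has $a\in ab$ (from $a\in ac$ for $a\neq c$) and $b\in ba=ab$ (using $ab=ba$ for $a\neq b$, cf.\ \ref{sub:set_join_operator} (set join operator)), so $ab\in Y$ passes through $a$ and $b$; conversely, if some $y=cd\in Y$ with $c\neq d$ also passes through $a$ and $b$, then $a,\,b\in cd$ with $a\neq b$, whence $cd=ab$ by the equivalence-relationality criterion \ref{sub:equivalence_relationality_criterion}(\ref{enu:equivalence_relationality_criterion_3}), i.e.\ $y=ab$. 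For the second axiom, every $y\in Y$ equals $ab$ for some $a\neq b$, and $a,\,b$ are then two distinct points of $y$ by the computation just made. Hence $\lambda\left(\left\langle \cdot,\,\cdot,\,\cdot\right\rangle \right)$ is a set-represented line structure on $X$.

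By (1) and (2), $\iota$ and $\lambda$ are well-defined maps in opposite directions, so for (3) it remains to check they are mutually inverse; since a join relation is determined by its family of joins, and a set-represented line structure is determined by its set of lines, it suffices to compare joins and line sets. Starting from $L=\left(Y,\,\in\right)$: the joins of $\iota\left(L\right)$ are the sets $\overleftrightarrow{ab}$ $\left(a\neq b\right)$, so $\lambda\left(\iota\left(L\right)\right)$ has line set $\left\{ \overleftrightarrow{ab}\mid a\neq b\right\} $, which is all of $Y$ because every line of $L$ carries two distinct points and hence equals $\overleftrightarrow{ab}$ for some $a\neq b$. Starting from $\left\langle \cdot,\,\cdot,\,\cdot\right\rangle $ with joins $ab$: in $\lambda\left(\left\langle \cdot,\,\cdot,\,\cdot\right\rangle \right)$ the line through $a\neq b$ is $\overleftrightarrow{ab}=ab$ by the existence/uniqueness computation in (2), so $\iota\left(\lambda\left(\left\langle \cdot,\,\cdot,\,\cdot\right\rangle \right)\right)$ has joins $ab$ for $a\neq b$ and $\left\{ a\right\} $ for $a=b$, matching $\left\langle \cdot,\,\cdot,\,\cdot\right\rangle $. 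Part (4) is then definitional: a set-represented line structure $L$ is by convention projective exactly when its associated join relation $\iota\left(L\right)$ is projective, so the bijection of (3) restricts to a bijection between set-represented projective line structures and projective join relations.

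The one step that is not pure bookkeeping is the uniqueness clause in (2) --- that two distinct points lie on only one line of $\lambda\left(\left\langle \cdot,\,\cdot,\,\cdot\right\rangle \right)$ --- and this is exactly the place where the equivalence-relationality hypothesis is indispensable, being supplied by \ref{sub:equivalence_relationality_criterion}(\ref{enu:equivalence_relationality_criterion_3}).
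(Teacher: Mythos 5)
Your proposal is correct and follows essentially the same route as the paper: part (1) via \ref{sub:join_space_associated_with_a_line_space}, the line-structure axioms in part (2) via existence ($y=ab$) and uniqueness through \ref{sub:equivalence_relationality_criterion}(\ref{enu:equivalence_relationality_criterion_3}), the two compositions in part (3) checked on line sets and joins exactly as in the paper, and part (4) by the definitional convention. The only cosmetic point is that $b\in ab$ for $a\neq b$ follows directly from the join-relation axiom $ab\subseteq ba$ (and its converse), so the citation of \ref{sub:set_join_operator} there is unnecessary but harmless.
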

\begin{proof}
~
\begin{enumerate}
\item This is a particular case of \ref{sub:join_space_associated_with_a_line_space}
(join space associated with a line space).
\item \noindent Step 1. Proof that for $c,\, d\in X\,,$ if $c\neq d\,,$
then there is exactly one $y\in Y$ such that $c,\, d\in y\,.$\\
Step 1.1 Existence. $y:=cd$ has the desired properties.\\
Step 1.2 Uniqueness. For $y\in Y$ it suffices to prove that $c,\, d\in y$
implies $y=cd\,.$ There are $a,\, b\in X$ such that $a\neq b$ and
$y=ab\,.$ It is to be proved that $ab=cd\,.$ Substituting $y=ab$
into the assumption $c,\, d\in y\,,$ $c,\, d\in ab\,.$ With the
assumptions that $X$ is equivalence-relational and $c\neq d$ it
follows by \ref{sub:equivalence_relationality_criterion} (equivalence-relationality
criterion) that $ab=cd\,.$\\
Step 2. Proof that for $y\in Y\,,$ there are $a,\, b\in y$ such
that $a\neq b\,.$ There are $a,\, b$ such that $a\neq b$ and $y=ab\,.$
Substituting into $a,\, b\in ab\,,$ $a,\, b\in y\,.$
\item Step 1. Proof that for each set-represented line structure $L$ on
$X\,,$ $\lambda\left(\iota\left(L\right)\right)=L\,.$
\begin{align*}
\lambda\left(\iota\left(L\right)\right) & =\lambda\left(\left\langle \cdot,\,\cdot,\,\cdot\right\rangle _{L}\right)\\
 & =\left\{ \left\langle a,\,\cdot,\, b\right\rangle _{L}|a,\, b\in X\,,\, a\neq b\right\} \\
 & =\left\{ \overleftrightarrow{ab}|a,\, b\in X\,,\, a\neq b\right\} \\
 & =L\,,
\end{align*}
where the inclusion $\supseteq$ in the last step follows from the
defining property of a line space that for $y\in L\,,$ there are
$a,\, b\in l$ such that $a\neq b\,.$\\
Step 2. Proof that for each equivalence-relational join relation $\left\langle \cdot,\,\cdot,\,\cdot\right\rangle $
on $X\,,$\\
$\iota\left(\lambda\left(\left\langle \cdot,\,\cdot,\,\cdot\right\rangle \right)\right)=\left\langle \cdot,\,\cdot,\,\cdot\right\rangle \,.$
It is to be proved that for $a,\, b,\, c\in X\,,$ $\left(a,\, b,\, c\right)\in\iota\left(\lambda\left(\left\langle \cdot,\,\cdot,\,\cdot\right\rangle \right)\right)$
iff $\left\langle a,\, b,\, c\right\rangle \,,$ i.e. for $a,\, c\in X\,,$
$\left\langle a,\,\cdot,\, c\right\rangle _{\lambda\left(\left\langle \cdot,\,\cdot,\,\cdot\right\rangle \right)}=\left\langle a,\,\cdot,\, c\right\rangle \,.$\\
Case 2.1. $a\neq c\,.$
\begin{align*}
\left\langle a,\,\cdot,\, c\right\rangle _{\lambda\left(\left\langle \cdot,\,\cdot,\,\cdot\right\rangle \right)} & =\overleftrightarrow{ac}\mbox{ in }\left(X,\,\lambda\left(\left\langle \cdot,\,\cdot,\,\cdot\right\rangle \right)\right)\\
 & =\left\langle a,\,\cdot,\, c\right\rangle \,.
\end{align*}
Case 2.2. $a=c\,.$
\begin{align*}
\left\langle a,\,\cdot,\, c\right\rangle _{\lambda\left(\left\langle \cdot,\,\cdot,\,\cdot\right\rangle \right)} & =\left\{ a\right\} \\
 & =\left\langle a,\,\cdot,\, c\right\rangle \,.
\end{align*}

\item is entailed by the definition of projectivity of a line space as projectivity
of the associated join space.
\end{enumerate}
\end{proof}
\noindent For an equivalence-relational join space $\left(X,\,\left\langle \cdot,\,\cdot,\,\cdot\right\rangle \right)\,,$
the set-represented line space $\left(X,\, Y,\,\in\right)$ with the
set of lines $Y$ as defined in \ref{sub:join_space_associated_with_a_line_space}
(\ref{enu:correspondence_between_set_represented_line_spaces_and_equivalence_relational_join_spaces_2})
(join space associated with a line space) is called the\emph{ set-represented
line space associated with} $\left(X,\,\left\langle \cdot,\,\cdot,\,\cdot\right\rangle \right)\,.$
Line space concepts and notations, when applied to an equivalence-relational
join space, refer to its associated line space.

\section{Join-Equivalence-Relationality Criterion}

Preprojective join spaces are characterized as join-equivalence-relational
join spaces, projective join spaces as dense join-equivalence-relational
join spaces. 

\begin{flushleft}
Let $\left(X,\,\left\langle \cdot,\,\cdot,\,\cdot\right\rangle \right)$
be a join space.
\par\end{flushleft}

\begin{flushleft}
A subset $C$ of $X$ is called \emph{join-closed} iff the following
equivalent conditions are satisfied:
\par\end{flushleft}
\begin{itemize}
\item $CC\subseteq C\,.$
\item For all $a,\, b\in C\,,$ $ab\subseteq C\,.$ 
\item For all $a,\, b\in C\,,$ if $a\neq b\,,$ then $ab\subseteq C\,.$
\end{itemize}
\noindent The affine join space on a left vector space over a division
ring $S$  is dense iff $\left|S\right|\geq3\,.$ The line-segment
join space on a left vector space over a totally ordered division
ring is dense.

In the affine join space on a left vector space, the join-closed sets
are the affine subspaces. In the line-segment join space on a left
vector space over a totally ordered division ring, the join-closed
sets are the convex sets.

For $A\subseteq X\,,$ the \emph{join closure} or \emph{join hull}
of $A$ in $X$ is the set
\begin{align*}
\mbox{jc}\left(A\right) & :=\bigcap\left\{ C\subseteq X|C\supseteq A\mbox{ and }C\mbox{ is join-closed.}\right\} \,.
\end{align*}

\noindent It is the smallest join-closed set in $X$ containg $A\,.$

In the affine join space on a left vector space, the join closure
of a set is its affine closure. In the line-segment join space on
a left vector space over a totally ordered division ring, the join
closure of a set is its convex closure.

$\left(X,\,\left\langle \cdot,\,\cdot,\,\cdot\right\rangle \right)$
and $\left\langle \cdot,\,\cdot,\,\cdot\right\rangle $ are called\emph{
proper} iff for all $a,\, b\in X\,,$ $ab$ is join-closed. The affine
join space on a left vector space is proper. The line-segment join
space on a left vector space over a totally ordered division ring
is proper.

$\left(X,\,\left\langle \cdot,\,\cdot,\,\cdot\right\rangle \right)$
and $\left\langle \cdot,\,\cdot,\,\cdot\right\rangle $ are called\emph{
join-transitive} iff the following condition is satisfied:
\begin{itemize}
\item \noindent For all $a,\, b\in X\,,$ $X$ is $ab$-transitive.
\end{itemize}
\noindent $\left(X,\,\left\langle \cdot,\,\cdot,\,\cdot\right\rangle \right)$
and $\left\langle \cdot,\,\cdot,\,\cdot\right\rangle $ are called\emph{
join-equivalence-relational} iff the following equivalent conditions
are satisfied:
\begin{itemize}
\item \noindent For all $a,\, b\in X\,,$ $X$ is $ab$-equivalence-relational.
\item $X$ is join-transitive and symmetric.
\end{itemize}
\noindent The affine join space on a left vector space is join-equivalence-relational.
The line-segment join space on a left vector space over a totally
ordered division ring is join-transitive but in general not join-equivalence-relational.

If $X$ is join-equivalence-relational, then it is equivalence-relational.

The following theorem is theorem 2.3 from \cite{retter_2014}, There,
earlier partial results have been cited. Condition (\ref{enu:join_transitivity_criterion_2})
is the join relation version of the strict join relation condition
Assioma XIII in \cite[§10]{peano_1889}, tanslated in \cite[chapter I, sect. 3]{whitehead_1907}.
\cite[sections 1.4, 1.5]{retter_2013} contains more examples, counter-examples,
alternative terminology, some history of these concepts and further
references. Here,  the following replacements of terminology have
been made: 'interval-convex' by 'proper', 'interval' by 'join', 'convex'
by 'join-closed'. Each of following two figures illustrates
\begin{itemize}
\item condition (\ref{enu:join_transitivity_criterion_1}): For all $a,\, b,\, c,\, x,\, y\in X\,,$
if $\left\langle ab,\, x,\, y\right\rangle $ and $\left\langle ab,\, y,\, c\right\rangle \,,$
then $\left\langle ab,\, x,\, c\right\rangle \,.$
\item condition (\ref{enu:join_transitivity_criterion_2}): For all $a,\, b,\, c,\, x\in X\,,$
if $x\in a\left(bc\right)\,,$ then $x\in\left(ab\right)c\,.$ 
\end{itemize}
~

\noindent 
\[
\xy<1cm,0cm>:(2,0)*=0{\bullet}="a",(2.25,-0.25)*=0{a},(1,0)*=0{\bullet}="b",(1,-0.25)*=0{b},(2,1)*=0{\bullet}="x",(2.25,1)*=0{x},(0,0)*=0{\bullet}="e",(2,2)*=0{\bullet}="y",(2.25,2)*=0{y},"e";"x"**@{.}?!{"y";"b"}*{\bullet}="c",(1.08,0.92)*=0{c},"e";"a"**@{.},"y";"a"**@{-},"y";"b"**@{-},(4,0)*=0{\bullet}="a1",(4,-0.25)*=0{a},(6,0)*=0{\bullet}="b1",(6.25,-0.25)*=0{b},(6,2)*=0{\bullet}="c1",(6.25,2)*=0{c},(6,1)*=0{\bullet}="y1",(6.25,1)*=0{y},(5,0)*=0{\bullet}="c1'","a1";"y1"**@{-}?!{"c1";"c1'"}*{\bullet}="x",(5.08,0.92)*=0{x},"a1";"b1"**@{.},"c1";"b1"**@{-},"c1";"c1'"**@{.}\endxy
\]
~
\begin{thm}
\label{sub:join_transitivity_criterion} (join-transitivity criterion)
Let $X$ be a join space. Then the following conditions are equivalent:
\begin{enumerate}
\item \label{enu:join_transitivity_criterion_1}$X$ is join-transitive.
\item \label{enu:join_transitivity_criterion_2}For all $a,\, b,\, c\in X\,,$
$a\left(bc\right)\subseteq\left(ab\right)c\,.$
\item \label{enu:join_transitivity_criterion_3}For all $a,\, b,\, c\in X\,,$
$a\left(bc\right)=\left(ab\right)c\,.$
\item \label{enu:join_transitivity_criterion_4}$P\left(X\right)$ with
the binary operation $\left(A,\, C\right)\mapsto AC$ is a semigroup.
\item \label{enu:join_transitivity_criterion_5}$P\left(X\right)$ with
the binary operation $\left(A,\, C\right)\mapsto AC$ is a commutative
semigroup.
\item \label{enu:join_transitivity_criterion_6}$X$ is proper, and for
each join-closed set $A\,,$ the binary relation $\left\langle A,\,\cdot,\,\cdot\right\rangle $
on $X$ is transitive.
\item \label{enu:join_transitivity_criterion_7}For all join-closed sets
$A,\, C\,,$ $AC$ is join-closed.
\item \label{enu:join_transitivity_criterion_8}For all $a,\, b,\, c\in X\,,$
$\left(ab\right)c$ is join-closed.
\item \label{enu:join_transitivity_criterion_9}For all $a,\, b,\, c\in X\,,$
$\mbox{jc}\left(\left\{ a,\, b,\, c\right\} \right)=\left(ab\right)c\,.$
\end{enumerate}
\end{thm}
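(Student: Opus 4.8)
The plan is to prove the nine conditions equivalent by a carefully chosen cycle of implications, using the earlier characterizations of transitivity relative to a set (Lemma~\ref{sub:transitivity_relative_to_a_set}) and the set-join identities in Proposition~\ref{sub:set_join_operator} throughout. First I would establish the equivalence of the ``algebraic'' cluster \mbox{(\ref{enu:join_transitivity_criterion_1})}--\mbox{(\ref{enu:join_transitivity_criterion_5})}: the key observation is that, by commutativity of the set join \mbox{(Proposition~\ref{sub:set_join_operator}(\ref{enu:set_join_operator_1}))}, the inclusion $a(bc)\subseteq(ab)c$ for all triples is symmetric under reversing the order of the three arguments, hence self-dual, which upgrades \mbox{(\ref{enu:join_transitivity_criterion_2})} to the equality in \mbox{(\ref{enu:join_transitivity_criterion_3})} and to full associativity of the binary operation $(A,C)\mapsto AC$ on $P(X)$ (first on singletons, then on arbitrary subsets by writing each subset as a union of singletons and using monotonicity from \mbox{Proposition~\ref{sub:set_join_operator}(\ref{enu:set_join_operator_5})}). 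Associativity plus the already-noted commutativity then gives \mbox{(\ref{enu:join_transitivity_criterion_5})}, and \mbox{(\ref{enu:join_transitivity_criterion_5})}$\Rightarrow$\mbox{(\ref{enu:join_transitivity_criterion_4})}$\Rightarrow$\mbox{(\ref{enu:join_transitivity_criterion_3})}$\Rightarrow$\mbox{(\ref{enu:join_transitivity_criterion_2})} are immediate. To close this cluster back to \mbox{(\ref{enu:join_transitivity_criterion_1})}, I would use Lemma~\ref{sub:transitivity_relative_to_a_set}: $X$ is $ab$-transitive iff $(ab)((ab)d)\subseteq(ab)d$ for all $d$, and the left side is $((ab)(ab))d$ by associativity, which is $(ab)d$ since $(ab)(ab)\subseteq ab$ (transitivity of $X$ with respect to the point... more directly, $(ab)(ab)=a(b(ab))\supseteq$ and one checks $(ab)(ab)\subseteq ab$ using $ab\subseteq a(ab)$ reversed); conversely join-transitivity applied with $A=ab$ and Lemma~\ref{sub:transitivity_relative_to_a_set} gives $a(bc)=(ab)c$ by bootstrapping from $(ab)(ab)c$.

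Next I would connect the ``closure'' cluster \mbox{(\ref{enu:join_transitivity_criterion_6})}--\mbox{(\ref{enu:join_transitivity_criterion_9})} to the algebraic one. For \mbox{(\ref{enu:join_transitivity_criterion_1})}$\Leftrightarrow$\mbox{(\ref{enu:join_transitivity_criterion_6})}, join-transitivity is by definition $ab$-transitivity for all $a,b$; I would show this is equivalent to $A$-transitivity for all join-closed $A$ by noting that every $ab$ is join-closed once $X$ is join-transitive (this is where properness enters), while conversely any join-closed $A$ containing $ab$ inherits transitivity --- the cleanest route is to prove \mbox{(\ref{enu:join_transitivity_criterion_3})}$\Rightarrow$\mbox{(\ref{enu:join_transitivity_criterion_6})} and \mbox{(\ref{enu:join_transitivity_criterion_6})}$\Rightarrow$\mbox{(\ref{enu:join_transitivity_criterion_1})} and let the already-proven cycle carry the rest. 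For \mbox{(\ref{enu:join_transitivity_criterion_7})}, I would use the unpacking of join-closedness ($AC$ join-closed iff $(AC)(AC)\subseteq AC$) together with associativity: $(AC)(AC)=A(C(AC))=A((CA)C)$, and with commutativity this reduces to $(AA)(CC)\subseteq AC$, clearly true when $A,C$ are join-closed --- so \mbox{(\ref{enu:join_transitivity_criterion_3})}$\Rightarrow$\mbox{(\ref{enu:join_transitivity_criterion_7})}. Then \mbox{(\ref{enu:join_transitivity_criterion_7})}$\Rightarrow$\mbox{(\ref{enu:join_transitivity_criterion_8})} since $ab$ and $c$ are join-closed (the former needs properness, which I'd extract as a consequence of \mbox{(\ref{enu:join_transitivity_criterion_7})} by taking $A=aa=\{a\}$-type degenerate cases, or more simply bundle properness into the chain), and \mbox{(\ref{enu:join_transitivity_criterion_8})}$\Rightarrow$\mbox{(\ref{enu:join_transitivity_criterion_9})}: $(ab)c$ is a join-closed set containing $a,b,c$, hence contains $\mathrm{jc}(\{a,b,c\})$; the reverse inclusion $(ab)c\subseteq\mathrm{jc}(\{a,b,c\})$ holds in any join space since $ab\subseteq\mathrm{jc}(\{a,b,c\})$ and then $(ab)c\subseteq\mathrm{jc}(\{a,b,c\})\cdot\mathrm{jc}(\{a,b,c\})\subseteq\mathrm{jc}(\{a,b,c\})$ --- wait, that last step itself needs join-closure to be closed under the operation, so I would instead argue $(ab)c\subseteq\mathrm{jc}(\{a,b,c\})$ directly by showing any join-closed set $C$ containing $a,b,c$ contains $ab$ and hence $(ab)c$. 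Finally \mbox{(\ref{enu:join_transitivity_criterion_9})}$\Rightarrow$\mbox{(\ref{enu:join_transitivity_criterion_8})} is trivial, and \mbox{(\ref{enu:join_transitivity_criterion_8})}$\Rightarrow$\mbox{(\ref{enu:join_transitivity_criterion_2})}: if $(ab)c$ is join-closed it contains $ab$ and $c$, hence $(ab)c\supseteq(ab)c$... rather, $a(bc)\subseteq(ab)c$ follows because $a,bc\subseteq(ab)c$ (as $(ab)c$ is join-closed containing $a,b,c$ so it contains $bc$) and join-closedness then yields $a(bc)\subseteq(ab)c$ --- but again ``join-closed set contains the join of two of its points'' is exactly the definition, so this is clean.

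The step I expect to be the main obstacle is closing the loop from the closure conditions back to join-transitivity without circular reasoning about properness: several implications above tacitly want $ab$ to be join-closed, and properness is not among the hypotheses, so I must be careful to \emph{derive} properness as a consequence at the right point rather than assume it. The safest architecture is to prove the long cycle \mbox{(\ref{enu:join_transitivity_criterion_1})}$\Rightarrow$\mbox{(\ref{enu:join_transitivity_criterion_3})}$\Rightarrow$\mbox{(\ref{enu:join_transitivity_criterion_5})}$\Rightarrow$\mbox{(\ref{enu:join_transitivity_criterion_4})}$\Rightarrow$\mbox{(\ref{enu:join_transitivity_criterion_2})}$\Rightarrow$\mbox{(\ref{enu:join_transitivity_criterion_3})}, then a separate cycle \mbox{(\ref{enu:join_transitivity_criterion_3})}$\Rightarrow$\mbox{(\ref{enu:join_transitivity_criterion_7})}$\Rightarrow$\mbox{(\ref{enu:join_transitivity_criterion_8})}$\Rightarrow$\mbox{(\ref{enu:join_transitivity_criterion_9})}$\Rightarrow$\mbox{(\ref{enu:join_transitivity_criterion_8})}$\Rightarrow$\mbox{(\ref{enu:join_transitivity_criterion_2})}, then \mbox{(\ref{enu:join_transitivity_criterion_2})}$\Rightarrow$\mbox{(\ref{enu:join_transitivity_criterion_6})}$\Rightarrow$\mbox{(\ref{enu:join_transitivity_criterion_1})}, where the first of these two last implications establishes properness (from \mbox{(\ref{enu:join_transitivity_criterion_2})}$\Leftrightarrow$\mbox{(\ref{enu:join_transitivity_criterion_7})} one gets $ab=(aa)b$... more precisely $ab$ join-closed follows from \mbox{(\ref{enu:join_transitivity_criterion_7})} with degenerate arguments, or from associativity: $(ab)(ab)=((ab)a)b$ and $(ab)a\subseteq(aa)b$-style reductions collapse to $ab$) and the second is just Lemma~\ref{sub:transitivity_relative_to_a_set} applied to each join-closed $A\supseteq$ every relevant $ab$. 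Once the bookkeeping of properness is pinned down, each individual implication is a short computation with the set-join laws of Proposition~\ref{sub:set_join_operator} and Lemma~\ref{sub:transitivity_relative_to_a_set}.
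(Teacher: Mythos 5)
Your plan is essentially sound, but note that the paper itself contains no proof of this theorem: it is stated with a pointer to Theorem 2.3 of \cite{retter_2014} ("earlier partial results have been cited" there), so there is no in-paper argument to match, and your self-contained derivation is necessarily a different route -- that is precisely what it buys, a proof from the axioms and the stated auxiliary results (\ref{sub:set_join_operator}, \ref{sub:transitivity_relative_to_a_set}) rather than an appeal to prior work. Checking your architecture, the cycle (\ref{enu:join_transitivity_criterion_1}) $\Rightarrow$ (\ref{enu:join_transitivity_criterion_3}) $\Rightarrow$ (\ref{enu:join_transitivity_criterion_5}) $\Rightarrow$ (\ref{enu:join_transitivity_criterion_4}) $\Rightarrow$ (\ref{enu:join_transitivity_criterion_2}) $\Rightarrow$ (\ref{enu:join_transitivity_criterion_3}), followed by (\ref{enu:join_transitivity_criterion_3}) $\Rightarrow$ (\ref{enu:join_transitivity_criterion_7}) $\Rightarrow$ (\ref{enu:join_transitivity_criterion_8}) $\Rightarrow$ (\ref{enu:join_transitivity_criterion_9}) $\Rightarrow$ (\ref{enu:join_transitivity_criterion_8}) $\Rightarrow$ (\ref{enu:join_transitivity_criterion_2}) and (\ref{enu:join_transitivity_criterion_2}) $\Rightarrow$ (\ref{enu:join_transitivity_criterion_6}) $\Rightarrow$ (\ref{enu:join_transitivity_criterion_1}), does reach every condition, and each individual implication is correct; your handling of properness (derive it, never assume it) is the right precaution and your ordering avoids circularity. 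Two loose spots should be tightened when writing it out. First, the passage from the pointwise identity $a\left(bc\right)=\left(ab\right)c$ to associativity on $P\left(X\right)$ is not really a monotonicity argument: it follows at once from $AC=\bigcup_{a\in A,\,c\in C}ac\,,$ whence $A\left(BC\right)=\bigcup a\left(bc\right)$ and $\left(AB\right)C=\bigcup\left(ab\right)c\,.$ Second, your garbled derivations of properness ("$ab\subseteq a\left(ab\right)$ reversed", "$\left(ab\right)a\subseteq\left(aa\right)b$-style reductions") are best replaced by the one-line computation in the commutative semigroup $P\left(X\right)\,:$ $\left(ab\right)\left(ab\right)=\left(aa\right)\left(bb\right)=\left\{ a\right\} \left\{ b\right\} =ab\,,$ using $aa=\left\{ a\right\} $ and $bb=\left\{ b\right\} \,;$ the same mechanism, $A\left(Ab\right)=\left(AA\right)b\subseteq Ab$ for join-closed $A$ together with \ref{sub:transitivity_relative_to_a_set}, settles (\ref{enu:join_transitivity_criterion_2}) $\Rightarrow$ (\ref{enu:join_transitivity_criterion_6}) cleanly, and properness also drops out of (\ref{enu:join_transitivity_criterion_7}) applied to the join-closed singletons $\left\{ a\right\} ,\left\{ b\right\} \,,$ as you note. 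With these small repairs the plan compiles into a complete proof.
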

~
\begin{thm}
\label{sub:join_equivalence_relationality_criterion} (join-equivalence-relationality
criterion) Let $X$ be an equivalence-relational join space. The following
conditions are equivalent:
\begin{enumerate}
\item \label{enu:join_equivalence_relationality_criterion_1}$X$ is join-equivalence-relational.
\item \label{enu:join_equivalence_relationality_criterion_2}$X$ is join-transitive.
\item \label{enu:join_equivalence_relationality_criterion_3}For all $a,\, b,\, c\in X\,,$
$a\left(bc\right)\subseteq\left(ab\right)c\,.$
\item \label{enu:join_equivalence_relationality_criterion_4}For all $a,\, b,\, c,\, d\in X\,,$
if $\left(c,\, b,\, a,\, d\right)$ is dependent, then $\left(a,\, b,\, c,\, d\right)$
is dependent. 
\item \label{enu:join_equivalence_relationality_criterion_5}$X$ is preprojective.
\end{enumerate}
\end{thm}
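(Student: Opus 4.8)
The plan is to establish the chain
\[
(\ref{enu:join_equivalence_relationality_criterion_1})\Leftrightarrow(\ref{enu:join_equivalence_relationality_criterion_2})\Leftrightarrow(\ref{enu:join_equivalence_relationality_criterion_3})\Leftrightarrow(\ref{enu:join_equivalence_relationality_criterion_4})\Leftrightarrow(\ref{enu:join_equivalence_relationality_criterion_5})\,,
\]
each link being a genuine two-sided implication obtained by quoting one of the results already proved, so that no cyclic argument is needed. Throughout, the standing hypothesis that $X$ is equivalence-relational is used to supply, uniformly in the points involved, the local hypotheses ($a$-equivalence-relationality, $b$-symmetry, $c$-symmetry, symmetry) demanded by those results.

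For $(\ref{enu:join_equivalence_relationality_criterion_1})\Leftrightarrow(\ref{enu:join_equivalence_relationality_criterion_2})$ I would simply invoke the definition: $X$ is join-equivalence-relational iff it is join-transitive and symmetric; since $X$ is equivalence-relational it is already symmetric, so the two conditions coincide. For $(\ref{enu:join_equivalence_relationality_criterion_2})\Leftrightarrow(\ref{enu:join_equivalence_relationality_criterion_3})$ there is nothing to do beyond citing \ref{sub:join_transitivity_criterion} (join-transitivity criterion), whose conditions (\ref{enu:join_transitivity_criterion_1}) and (\ref{enu:join_transitivity_criterion_2}) are exactly these.

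For $(\ref{enu:join_equivalence_relationality_criterion_3})\Leftrightarrow(\ref{enu:join_equivalence_relationality_criterion_4})$ I would first rephrase (\ref{enu:join_equivalence_relationality_criterion_3}) pointwise as: for all $a,b,c,d\in X$, $d\in a(bc)$ implies $d\in(ab)c$. Because $X$ is equivalence-relational it is $a$-equivalence-relational and $b$-symmetric for every choice of $a,b$, so \ref{sub:rejoinability_criterion} (rejoinability criterion) applies to each quadruple $(a,b,c,d)$ and identifies this pointwise statement with ``if $(c,b,a,d)$ is dependent then $(a,b,c,d)$ is dependent''; quantifying over all $a,b,c,d$ gives the equivalence with (\ref{enu:join_equivalence_relationality_criterion_4}). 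For $(\ref{enu:join_equivalence_relationality_criterion_4})\Leftrightarrow(\ref{enu:join_equivalence_relationality_criterion_5})$ I would use that $X$, being symmetric, is $x$-symmetric at every point $x$, so \ref{sub:quadruple_dependence_criterion} (quadruple dependence criterion) rewrites ``$(a,b,c,d)$ is dependent'' as ``$a=b$ or $c=d$ or $ab\cap cd\neq\emptyset$'' and, applied to the permuted quadruple and using $cb=bc$ and $ad=da$, rewrites ``$(c,b,a,d)$ is dependent'' as ``$b=c$ or $a=d$ or $bc\cap ad\neq\emptyset$''. Substituting these two reformulations turns (\ref{enu:join_equivalence_relationality_criterion_4}) verbatim into the second defining clause of preprojectivity, while the first clause (that $X$ is equivalence-relational) holds by hypothesis; hence $(\ref{enu:join_equivalence_relationality_criterion_4})\Leftrightarrow(\ref{enu:join_equivalence_relationality_criterion_5})$.

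The only delicate point is bookkeeping rather than mathematics: one must check that the blanket assumption of equivalence-relationality really does discharge, uniformly in the points, every per-quadruple hypothesis required by \ref{sub:rejoinability_criterion} and \ref{sub:quadruple_dependence_criterion}, and that after the relabelings the two occurrences of ``is dependent'' line up exactly with the antecedent and the consequent of the preprojectivity implication. I do not expect any genuine obstacle beyond this.
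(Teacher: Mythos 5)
Your proposal is correct and follows essentially the same route as the paper: the same chain $(\ref{enu:join_equivalence_relationality_criterion_1})\Leftrightarrow(\ref{enu:join_equivalence_relationality_criterion_2})\Leftrightarrow(\ref{enu:join_equivalence_relationality_criterion_3})\Leftrightarrow(\ref{enu:join_equivalence_relationality_criterion_4})\Leftrightarrow(\ref{enu:join_equivalence_relationality_criterion_5})$, citing \ref{sub:join_transitivity_criterion}, \ref{sub:rejoinability_criterion} and \ref{sub:quadruple_dependence_criterion} with the standing equivalence-relationality discharging their pointwise hypotheses. Your explicit remark that the quadruple dependence criterion applied to $(c,\,b,\,a,\,d)$ needs symmetry at $a$ (and commutativity $cb=bc$) is a welcome clarification of a point the paper passes over tersely.
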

\begin{proof}
Step 1. (\ref{enu:join_equivalence_relationality_criterion_1}) $\Leftrightarrow$
(\ref{enu:join_equivalence_relationality_criterion_2}). The assumption
that $X$ is equivalence-relational entails that $X$ is symmetric.
Consequently, (\ref{enu:join_equivalence_relationality_criterion_1}),
which says that $X$ is symmetric and join-transitive, is equivalent
to (\ref{enu:join_equivalence_relationality_criterion_2}).

Step 2. (\ref{enu:join_equivalence_relationality_criterion_2}) $\Leftrightarrow$
(\ref{enu:join_equivalence_relationality_criterion_3}) is a particular
case of \ref{sub:join_transitivity_criterion} (join-transitivity
criterion).

Step 3. (\ref{enu:join_equivalence_relationality_criterion_3}) $\Leftrightarrow$
(\ref{enu:join_equivalence_relationality_criterion_4}). It is to
be proved: For all $a,\, b,\, c,\, d\in X\,,$ $d\in a\left(bc\right)\Longrightarrow d\in\left(ab\right)c$
iff for all $a,\, b,\, c,\, d\in X$ $\left(c,\, b,\, a,\, d\right)$
dependent implies $\left(a,\, b,\, c,\, d\right)$ dependent. The
assumption that $X$ is equivalence-relational entails that $X$ is
$a$-equivalence-relational and $b$-symmetric. The claim follows
by \ref{sub:rejoinability_criterion} (rejoinability criterion).

Step 4. (\ref{enu:join_equivalence_relationality_criterion_5}) $\Leftrightarrow$
(\ref{enu:join_equivalence_relationality_criterion_4}). It is to
be proved: for all $a,\, b,\, c,\, d\in X$ $\left(c,\, b,\, a,\, d\right)$
dependent implies $\left(a,\, b,\, c,\, d\right)$ dependent. iff
For all $a,\, b,\, c,\, d\in X\,,$ if $b=c$ or $a=d$ or $bc\cap ad\neq\emptyset\,,$
then $a=b$ or $c=d$ or $ab\cap cd\neq\emptyset\,.$ The assumption
that $X$ is equivalence-relational entails that $X$ is $c$-symmetric.
The claim follows by \ref{sub:quadruple_dependence_criterion} (quadruple
dependence criterion).\end{proof}
\begin{cor}
\label{sub:projectivity_criterion} (projectivity criterion) A join
space is projective iff it is dense and join-equivalence-relational.
\end{cor}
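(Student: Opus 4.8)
The plan is to unwind the definition of \emph{projective} and then reduce everything to the equivalence (\ref{enu:join_equivalence_relationality_criterion_5})$\Leftrightarrow$(\ref{enu:join_equivalence_relationality_criterion_1}) already established in \ref{sub:join_equivalence_relationality_criterion} (join-equivalence-relationality criterion). Recall that, by definition, a join space $X$ is projective iff it is preprojective and dense, and that being preprojective includes being equivalence-relational; also recall the remark preceding \ref{sub:join_transitivity_criterion} (join-transitivity criterion) that a join-equivalence-relational join space is automatically equivalence-relational. With these two bookkeeping facts, the corollary falls out of \ref{sub:join_equivalence_relationality_criterion} in each direction.

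For the forward implication I would argue: if $X$ is projective, then $X$ is preprojective and dense, so in particular $X$ is an equivalence-relational join space and \ref{sub:join_equivalence_relationality_criterion} applies. Condition (\ref{enu:join_equivalence_relationality_criterion_5}) holds (that is what preprojectivity says), so by the theorem condition (\ref{enu:join_equivalence_relationality_criterion_1}) holds too, i.e.\ $X$ is join-equivalence-relational; and $X$ is dense by hypothesis. For the converse: if $X$ is dense and join-equivalence-relational, then $X$ is equivalence-relational by the remark above, so \ref{sub:join_equivalence_relationality_criterion} again applies; condition (\ref{enu:join_equivalence_relationality_criterion_1}) holds, hence condition (\ref{enu:join_equivalence_relationality_criterion_5}) holds, i.e.\ $X$ is preprojective; together with density this is exactly projectivity.

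There is essentially no obstacle here: all the substantive content is packed inside \ref{sub:join_equivalence_relationality_criterion}, and this corollary is pure unwinding of the definitions of \emph{projective}, \emph{preprojective} and \emph{dense}. The one point that needs care is that the equivalence (\ref{enu:join_equivalence_relationality_criterion_1})$\Leftrightarrow$(\ref{enu:join_equivalence_relationality_criterion_5}) is asserted only under the standing hypothesis of \ref{sub:join_equivalence_relationality_criterion} that $X$ is equivalence-relational, so in each direction one must first certify that hypothesis---trivially from the definition of preprojectivity in the forward direction, and from the cited remark in the converse---before invoking the theorem.
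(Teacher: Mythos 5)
Your proof is correct and matches the paper's intent: the paper states this corollary without a separate proof, treating it as an immediate consequence of Theorem \ref{sub:join_equivalence_relationality_criterion} via the definitions of \emph{projective} and \emph{preprojective}, which is exactly your argument. Your care about the standing hypothesis (equivalence-relationality, supplied by preprojectivity in one direction and by the remark that join-equivalence-relational implies equivalence-relational in the other) is precisely the point that makes the unwinding legitimate.
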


\section{Matroid Criteria}

It is shown how the concepts  of a preprojective join space and of
a projective join space can be derived from the concept of a matroid. 

Let $X$ be a set. A \emph{closure system} or \emph{Moore family}
on $X$ is a set $C$ of subsets of $X$ such that $X\in C$ and for
each non-empty $D\subseteq C\,,$ $\bigcap D\in C\,.$

A \emph{closure space} is a pair consisting of a set $X$ and a closure
system $C$ on $X\,.$ A set $A\subseteq X$ is called \emph{closed}
iff $A\in C\,.$ When $\left(X,\, O\right)$ is a topological space,
then the pair consisting of $X$ and the set of closed sets in $\left(X,\, O\right)$
is a closure space. When $\left(X,\,\left\langle \cdot,\,\cdot,\,\cdot\right\rangle \right)$
is a join space, then the pair consisting of $X$ and the set of join-closed
sets is a closure space. When $\left(X,\,\cdot\right)$ is a group,
then the pair consisting of $X$ and the set of subgroups is a closure
space. The concept of a closure space as defined here is slighly more
general than in \cite[chapter I, 1.2]{vel_1993}, where it is required
that $\emptyset\in C$ and a closure system is called a protopology.
A group with its set of subgroups wouldn't be a closure space under
this narrower defintion.

A closure space $\left(X,\, C\right)$ is also simply denoted by $X$
when it is clear from the context whether the closure space or only
the set is meant.

Let $\left(X,\, C\right)$ be a closure space.

For $A\subseteq X\,,$ the \emph{closure} of $A$ is the set
\begin{align*}
\mbox{cl}\left(A\right) & :=\bigcap\left\{ B\subseteq X|B\supseteq A\mbox{ and }B\in C\right\} 
\end{align*}

\noindent It is the smallest closed superset of $A\,.$ When $X$
is a join space and $C$ is the system of join-closed sets in $X\,,$
then for $A\subseteq X\,,$ the closure of $A$ is the join closure
of $A\,.$

For $A\subseteq X\,,$ the \emph{entailment relation} relative to
$A$ or $A$-entailment relation is the binary relation $\vdash_{A}$
on $X$ defined by
\begin{align*}
x\vdash_{A}y & :\Leftrightarrow y\in\mbox{cl}\left(A\cup\left\{ x\right\} \right)\,.
\end{align*}

\noindent $\left(X,\, C\right)$ is called an \emph{exchange space}
iff for each closed set $A\subseteq X\,,$ one and therefore all of
the following conditions hold, which are equivalent by \cite[Proposition 3.1.1 (3)]{retter_2013}:
\begin{itemize}
\item The relation $\vdash_{A}$ is symmetric on $X\setminus A\,.$
\item The restriction $\vdash_{A}|\left(X\setminus A\right)$ is an equivalence
relation on $X\setminus A\,.$
\end{itemize}
\noindent $\left(X,\, C\right)$ is called \emph{algebraic} or \emph{combinatorial}
iff for each chain $D\subseteq C\,,$ $\bigcup D\in C\,.$ \cite[chapter I, 1.3]{vel_1993}
states the equivalence of this definition with other well-known definitions.
When $X$ is a join space and $C$ is the system of join-closed sets
in $X\,,$ then $\left(X,\, C\right)$ is a combinatorial closure
space.

A\emph{ matroid} is a combinatorial exchange space. When $\left(X,\,+,\,\cdot\right)$
is a vector space over a division ring, then the pair consisting of
$X$ and the set of subspaces is a matroid. The concept of a matroid
as defined here is slighly more general than in \cite[chapter I, 1.2]{vel_1993},
where it is, via the definition of a closure space, required that
$\emptyset\in C\,.$ A vector space with its set of subspaces wouldn't
be a matroid under this narrower defintion.

\noindent The following proposition is proposition 3.3 from \cite{retter_2014}.
\begin{prop}
\label{sub:interval_transitive_interval_spaces} (join-transitive
join spaces) Let $X$ be a join-transitive join space and $A$ a join-closed
set. Then the relative entailment relation $\vdash_{A}$ is the reverse
relation of the binary relation $\left\langle A,\,\cdot,\,\cdot\right\rangle \,.$\end{prop}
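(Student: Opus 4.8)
The plan is to reduce the proposition to the single set identity
\begin{align*}
\mbox{jc}\left(A\cup\left\{ x\right\} \right) & =Ax
\end{align*}
for every $x\in X\,,$ where $\mbox{jc}$ is the join closure, which in the closure space whose closed sets are the join-closed sets coincides with $\mbox{cl}\,.$ Granting this, for $x,\, y\in X$ one has $x\vdash_{A}y$ iff $y\in\mbox{cl}\left(A\cup\left\{ x\right\} \right)=Ax$ iff there is an $a\in A$ with $y\in ax\,,$ which is exactly $\left\langle A,\, y,\, x\right\rangle \,;$ hence $\vdash_{A}$ is the reverse relation of $\left\langle A,\,\cdot,\,\cdot\right\rangle \,.$ (We may take $A\neq\emptyset\,;$ the degenerate case is not at issue here.)

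First I would establish $Ax\subseteq\mbox{jc}\left(A\cup\left\{ x\right\} \right)\,,$ which uses neither hypothesis. If $C$ is any join-closed set with $A\cup\left\{ x\right\} \subseteq C\,,$ then for each $a\in A\subseteq C\,,$ join-closedness of $C$ together with $x\in C$ gives $ax\subseteq C\,;$ taking the union over $a\in A$ yields $Ax\subseteq C\,.$ Intersecting over all such $C$ gives $Ax\subseteq\mbox{jc}\left(A\cup\left\{ x\right\} \right)\,.$

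Next I would establish the reverse inclusion by showing that $Ax$ is itself a join-closed set containing $A\cup\left\{ x\right\} \,;$ being a join-closed superset of $A\cup\left\{ x\right\} \,,$ it then contains the smallest such set, $\mbox{jc}\left(A\cup\left\{ x\right\} \right)\,.$ That $A\subseteq A\left\{ x\right\} =Ax$ and $\left\{ x\right\} \subseteq\left\{ x\right\} A=Ax$ follow from \ref{sub:set_join_operator} (\ref{enu:set_join_operator_2}) and (\ref{enu:set_join_operator_1}) (set join operator), using $A\neq\emptyset$ and $\left\{ x\right\} \neq\emptyset\,.$ The key point is that $Ax=A\left\{ x\right\} $ is join-closed: $A$ is join-closed by hypothesis, $\left\{ x\right\} $ is join-closed because $xx=\left\{ x\right\} \,,$ and since $X$ is join-transitive, \ref{sub:join_transitivity_criterion} (join-transitivity criterion), condition (\ref{enu:join_transitivity_criterion_7}), gives that the join of two join-closed sets is join-closed.

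The main obstacle is precisely this last step, and it is the only place the join-transitivity hypothesis enters: for a general join space $Ax$ need not satisfy $\left(Ax\right)\left(Ax\right)\subseteq Ax\,,$ and then the asserted identity, hence the proposition, can fail. Everything else is routine manipulation with the set join operator of \ref{sub:set_join_operator}.
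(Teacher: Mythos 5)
Your proof is correct, and it is a genuinely different situation from the paper: the paper offers no argument at all for this proposition, deferring entirely to \cite[proposition 3.3]{retter_2014}. Your reduction of the statement to the single identity $\mbox{jc}\left(A\cup\left\{ x\right\} \right)=Ax$ captures exactly the content of the proposition, the inclusion $Ax\subseteq\mbox{jc}\left(A\cup\left\{ x\right\} \right)$ is indeed hypothesis-free, and the reverse inclusion correctly isolates the only point where join-transitivity enters: $Ax=A\left\{ x\right\} $ is a join-closed superset of $A\cup\left\{ x\right\} $, since $A$ is join-closed by hypothesis, $\left\{ x\right\} $ is join-closed because $xx=\left\{ x\right\} \,,$ and condition (\ref{enu:join_transitivity_criterion_7}) of \ref{sub:join_transitivity_criterion} (join-transitivity criterion) makes the join of two join-closed sets join-closed. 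Be aware that what you buy is a derivation that is self-contained only relative to \ref{sub:join_transitivity_criterion}, which this paper also imports from \cite[theorem 2.3]{retter_2014} without proof; equivalently you could use its conditions (\ref{enu:join_transitivity_criterion_3})--(\ref{enu:join_transitivity_criterion_5}) and compute $\left(Ax\right)\left(Ax\right)=\left(AA\right)\left(xx\right)\subseteq Ax\,,$ which is the same dependence. One caveat about your parenthetical: the proposition as literally stated does not exclude $A=\emptyset\,,$ which is join-closed, and there the statement fails (for $X\neq\emptyset$), since $\vdash_{\emptyset}$ is the identity relation on $X$ while $\left\langle \emptyset,\,\cdot,\,\cdot\right\rangle $ is empty; this is a defect of the statement (or an implicit nonemptiness convention in the cited source) rather than of your argument, and the later use in \ref{sub:matroid_criterion_for_join_transitive_join_spaces} is unaffected because for $A=\emptyset$ both relations are trivially symmetric on $X\setminus A\,,$ but you should flag it rather than call it ``not at issue.''
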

\begin{proof}
\noindent \cite[proposition 3.3]{retter_2014}
\end{proof}
\noindent The following proposition is proposition 3.4 from \cite{retter_2014}.
It is a particular case of a more general principle for relational
structures.
\begin{prop}
\label{sub:join_spaces_are_combinatorial_closure_spaces} (join spaces
are combinatorial closure spaces) Let $X$ be a join space. Then the
closure space consisting of $X$ and the set of join-closed sets is
combinatorial.\end{prop}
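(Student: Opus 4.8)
The plan is to exploit the two-element characterization of join-closedness established earlier: a subset $C\subseteq X$ is join-closed iff $ab\subseteq C$ for all $a,b\in C$. The point is that this condition only ever involves two points of $C$ at a time, so it is automatically preserved under unions of chains, since any two points of such a union already lie in a single member of the chain.

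Concretely, I would let $D$ be a chain of join-closed subsets of $X$ and put $U:=\bigcup D$. To show $U$ is join-closed it suffices to take arbitrary $a,b\in U$ and prove $ab\subseteq U$. By definition of the union there are $C_{1},C_{2}\in D$ with $a\in C_{1}$ and $b\in C_{2}$; since $D$ is a chain, one of $C_{1},C_{2}$ contains the other, say $C_{1}\subseteq C_{2}$. Then $a,b\in C_{2}$, and because $C_{2}$ is join-closed, $ab\subseteq C_{2}\subseteq U$. As $a,b\in U$ were arbitrary, $U$ is join-closed, i.e. $U$ belongs to the set of join-closed sets.

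It remains only to dispose of the degenerate chain $D=\emptyset$, for which $\bigcup D=\emptyset$: here $\emptyset$ is join-closed because $\emptyset\emptyset=\emptyset$ (there is no point of $\emptyset$ to witness membership in the join), so the conclusion holds in this case as well. Hence the set of join-closed sets contains the union of every chain, which is exactly the assertion that the closure space consisting of $X$ and the set of join-closed sets is combinatorial. I do not anticipate any real obstacle; the only step to state carefully is the reduction to the two-element form of join-closedness, since it is precisely this finiteness — only two points of the union are needed at once — that makes the chain argument go through.
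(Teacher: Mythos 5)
Your argument is correct and complete. For comparison: the paper does not prove this proposition in the text at all; it simply cites proposition 3.4 of the reference by Retter (2014) and remarks that the statement is "a particular case of a more general principle for relational structures," namely that a closure condition imposed by constraints involving only finitely many elements at a time is preserved under unions of chains. Your proof is exactly that general principle made explicit in this special case: you reduce join-closedness to the two-point condition ($a,b\in C$ implies $ab\subseteq C$), observe that any two points of $\bigcup D$ already lie in a single member of the chain $D$ by comparability, and conclude $ab\subseteq C_{2}\subseteq\bigcup D$. Your treatment of the empty chain is also consistent with the paper's conventions: although the paper's notion of closure space does not demand $\emptyset\in C$ in general, here $\emptyset$ is vacuously join-closed, so $\bigcup\emptyset=\emptyset$ does belong to the system of join-closed sets. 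So the only difference from the paper is that you supply the elementary argument that the paper delegates to an external reference; what your version buys is self-containedness, while the paper's citation emphasizes that the fact has nothing to do with join spaces specifically and holds for any structure whose closed sets are defined by finitary conditions.
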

\begin{thm}
\label{sub:matroid_criterion_for_join_transitive_join_spaces} (matroid
criterion for join-transitive join spaces) Let $X$ be a join-transitive
join space. The following conditions are equivalent:
\begin{enumerate}
\item \label{enu:matroid_criterion_for_join_transitive_join_spaces_1}$X$
is symmetric.
\item \label{enu:matroid_criterion_for_join_transitive_join_spaces_2}$X$
is join-equivalence-relational.
\item \label{enu:matroid_criterion_for_join_transitive_join_spaces_3}The
pair consisting of $X$ and the set of join-closed sets is an exchange
space.
\item \label{enu:matroid_criterion_for_join_transitive_join_spaces_4}The
pair consisting of $X$ and the set of join-closed sets is a matroid.
\end{enumerate}
\end{thm}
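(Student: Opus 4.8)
The plan is to establish the four-way equivalence in three groups, using the relative entailment relation as the bridge between the join-geometric conditions (1), (2) and the closure-theoretic conditions (3), (4). First I would dispose of the two ``easy'' equivalences. For (1) $\Leftrightarrow$ (2): since $X$ is assumed join-transitive, the definition of join-equivalence-relationality (``$X$ is join-transitive and symmetric'') collapses to symmetry alone, so (2) holds exactly when (1) does. For (3) $\Leftrightarrow$ (4): a matroid is by definition a combinatorial exchange space, and by \ref{sub:join_spaces_are_combinatorial_closure_spaces} (join spaces are combinatorial closure spaces) the closure space formed by $X$ and its join-closed sets is already combinatorial; hence it is a matroid iff it is an exchange space.

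The core step is (1) $\Leftrightarrow$ (3). Here I would invoke \ref{sub:interval_transitive_interval_spaces} (join-transitive join spaces): since $X$ is join-transitive, for every join-closed set $A$ the relative entailment relation $\vdash_{A}$ is the reverse of the binary relation $\left\langle A,\,\cdot,\,\cdot\right\rangle \,.$ A binary relation restricted to a subset is symmetric precisely when its reverse restricted to that subset is symmetric, so ``$\vdash_{A}$ is symmetric on $X\setminus A$'' is equivalent to ``$X$ is $A$-symmetric''. Therefore the pair consisting of $X$ and its join-closed sets is an exchange space iff $X$ is $A$-symmetric for every join-closed set $A\,.$ The forward direction (1) $\Rightarrow$ (3) is then immediate, because a symmetric join space is $A$-symmetric for every $A\subseteq X\,.$ For the converse (3) $\Rightarrow$ (1) I would use that every singleton $\left\{ a\right\} $ is join-closed (as $aa=\left\{ a\right\} $), so $A$-symmetry for all join-closed $A$ in particular yields $a$-symmetry for all $a\in X\,,$ which is one of the listed equivalent formulations of symmetry.

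I expect the only real subtlety to be definitional bookkeeping: lining up the definitions of $A$-symmetry, symmetry, and exchange space with the direction convention for $\vdash_{A}$ in \ref{sub:interval_transitive_interval_spaces}, and in particular noting that singletons qualify as join-closed, so that the class of join-closed sets is rich enough to recover full symmetry from $A$-symmetry restricted to that class. No genuine computation is needed beyond these definitional matchings together with the two cited propositions.
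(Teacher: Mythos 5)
Your proposal is correct and follows essentially the same route as the paper: the same three-step decomposition, with (1) $\Leftrightarrow$ (2) by definition under join-transitivity, (3) $\Leftrightarrow$ (4) via \ref{sub:join_spaces_are_combinatorial_closure_spaces}, and (1) $\Leftrightarrow$ (3) via \ref{sub:interval_transitive_interval_spaces} and the fact that symmetry of a relation is preserved under reversal. Your explicit observation that singletons are join-closed (so that $A$-symmetry over join-closed sets recovers full symmetry) is a detail the paper's proof leaves implicit, and it is a correct and welcome addition rather than a different approach.
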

\begin{proof}
Step 1. (\ref{enu:matroid_criterion_for_join_transitive_join_spaces_1})
$\Leftrightarrow$ (\ref{enu:matroid_criterion_for_join_transitive_join_spaces_2})
(\ref{enu:matroid_criterion_for_join_transitive_join_spaces_2}) says:
$X$ is symmetric and join-transitive. With the assumption that $X$
is join-transitive, this condition is equivalent to (\ref{enu:matroid_criterion_for_join_transitive_join_spaces_1}).

Step 2. (\ref{enu:matroid_criterion_for_join_transitive_join_spaces_1})
$\Leftrightarrow$ (\ref{enu:matroid_criterion_for_join_transitive_join_spaces_3}).
It suffices to prove for each join-closed set $A$ that the binary
relation $\left\langle A,\,\cdot,\,\cdot\right\rangle $ is symmetric
on $X\setminus A$ iff the relative entailment relation $\vdash_{A}$
is symmetric on $X\setminus A\,.$ Symmetry being preserved under
passing to the reverse relation, it suffices to prove that for each
join-closed set $A\,,$ the relation $\vdash_{A}$ is the reverse
relation of the relation $\left\langle A,\,\cdot,\,\cdot\right\rangle \,.$
This claim follows by \ref{sub:interval_transitive_interval_spaces}
(join-transitve join spaces) from the assumption that $X$ is join-transitive.

Step 3. (\ref{enu:matroid_criterion_for_join_transitive_join_spaces_3})
$\Leftrightarrow$ (\ref{enu:matroid_criterion_for_join_transitive_join_spaces_4})
follows by \ref{sub:join_spaces_are_combinatorial_closure_spaces}
(join spaces are combinatorial closure spaces).\end{proof}
\begin{cor}
\label{sub:matroid_preprojectivity_criterion} (matroid preprojectivity
criterion) Let $X$ be a join space. $X$ is preprojective iff it
is join-transitive and the pair consisting of $X$ and the set of
join-closed sets is a matroid.\end{cor}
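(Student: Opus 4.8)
The plan is to obtain this from two results already available: Theorem \ref{sub:matroid_criterion_for_join_transitive_join_spaces} (matroid criterion for join-transitive join spaces) and Theorem \ref{sub:join_equivalence_relationality_criterion} (join-equivalence-relationality criterion). The common hinge is the property of being join-equivalence-relational, which occurs as condition (\ref{enu:matroid_criterion_for_join_transitive_join_spaces_2}) of the former and as condition (\ref{enu:join_equivalence_relationality_criterion_1}) of the latter, together with the observation recorded earlier that a join-equivalence-relational join space is equivalence-relational.

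First I would treat the forward implication. Suppose $X$ is preprojective. By definition $X$ is then equivalence-relational, so Theorem \ref{sub:join_equivalence_relationality_criterion} applies to $X$; since $X$ satisfies condition (\ref{enu:join_equivalence_relationality_criterion_5}) (preprojectivity), it also satisfies conditions (\ref{enu:join_equivalence_relationality_criterion_2}) (join-transitivity) and (\ref{enu:join_equivalence_relationality_criterion_1}) (join-equivalence-relationality). Being join-transitive, $X$ meets the hypothesis of Theorem \ref{sub:matroid_criterion_for_join_transitive_join_spaces}; there condition (\ref{enu:matroid_criterion_for_join_transitive_join_spaces_2}) (join-equivalence-relationality) yields condition (\ref{enu:matroid_criterion_for_join_transitive_join_spaces_4}), i.e.\ the pair consisting of $X$ and the set of join-closed sets is a matroid. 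Hence $X$ is join-transitive and that pair is a matroid, as required.

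For the converse I would assume that $X$ is join-transitive and that the pair consisting of $X$ and the set of join-closed sets is a matroid. Since $X$ is join-transitive, Theorem \ref{sub:matroid_criterion_for_join_transitive_join_spaces} applies, and condition (\ref{enu:matroid_criterion_for_join_transitive_join_spaces_4}) gives condition (\ref{enu:matroid_criterion_for_join_transitive_join_spaces_2}): $X$ is join-equivalence-relational, hence in particular equivalence-relational. Now $X$ is an equivalence-relational join space, so Theorem \ref{sub:join_equivalence_relationality_criterion} applies, and condition (\ref{enu:join_equivalence_relationality_criterion_1}) (join-equivalence-relationality) gives condition (\ref{enu:join_equivalence_relationality_criterion_5}): $X$ is preprojective.

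The only delicate point — the "main obstacle", such as it is — is the bookkeeping of hypotheses, since Theorem \ref{sub:join_equivalence_relationality_criterion} presupposes that $X$ is equivalence-relational. In the forward direction this is immediate from the definition of preprojectivity, but in the converse direction one must first extract equivalence-relationality from the matroid assumption, using the matroid criterion for join-transitive join spaces together with the remark that join-equivalence-relational join spaces are equivalence-relational, before the join-equivalence-relationality criterion may be invoked.
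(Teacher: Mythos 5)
Your proof is correct and follows essentially the same route as the paper: both hinge on the equivalence of preprojectivity with join-equivalence-relationality (Theorem \ref{sub:join_equivalence_relationality_criterion}) combined with Theorem \ref{sub:matroid_criterion_for_join_transitive_join_spaces}, which under join-transitivity identifies join-equivalence-relationality with the matroid property. Your explicit bookkeeping of the equivalence-relationality hypothesis in the converse direction is in fact slightly more careful than the paper's chain of equivalences (which cites \ref{sub:projectivity_criterion} where \ref{sub:join_equivalence_relationality_criterion} is what is actually used).
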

\begin{proof}
By \ref{sub:projectivity_criterion} (projectivity criterion) and
\ref{sub:matroid_criterion_for_join_transitive_join_spaces} (matroid
criterion for join-transitive join spaces), the following conditions
are equivalent:
\begin{align*}
 & X\mbox{ is preprojective.}\\
 & X\mbox{ is join-equivalence-relational.}\\
 & X\mbox{ is join-transitive and symmetric.}\\
 & X\mbox{ is join-transitive and the pair consisting of }X\mbox{ and the join-closed sets is a matroid.}
\end{align*}
\end{proof}
\begin{cor}
\label{sub:matroid_projectivity_criterion} (matroid projectivity
criterion) Let $X$ be a join space. $X$ is projective iff it is
dense and join-transitive and the pair consisting of $X$ and the
set of join-closed sets is a matroid.
\end{cor}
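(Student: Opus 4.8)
The plan is to derive this immediately from the definition of projectivity together with \ref{sub:matroid_preprojectivity_criterion} (matroid preprojectivity criterion). By definition, a join space $X$ is projective iff it is preprojective and dense. So it will suffice to show that, for a join space $X$, the conjunction ``$X$ is preprojective and $X$ is dense'' is equivalent to the conjunction ``$X$ is dense and $X$ is join-transitive and the pair consisting of $X$ and the set of join-closed sets is a matroid''. Since the clause ``$X$ is dense'' occurs on both sides, this reduces to the equivalence of ``$X$ is preprojective'' with ``$X$ is join-transitive and the pair consisting of $X$ and the set of join-closed sets is a matroid'', which is exactly the content of \ref{sub:matroid_preprojectivity_criterion} (matroid preprojectivity criterion).

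Concretely, I would record the short chain of equivalences: $X$ is projective iff $X$ is preprojective and dense, iff ($X$ is join-transitive and the pair consisting of $X$ and the set of join-closed sets is a matroid) and $X$ is dense, iff $X$ is dense and join-transitive and the pair consisting of $X$ and the set of join-closed sets is a matroid. Here the first step is the definition of a projective join space, the second step is \ref{sub:matroid_preprojectivity_criterion} applied to the preprojectivity clause while the density clause is carried along unchanged, and the third step is merely reordering the conjuncts. There is no genuine obstacle in this argument; the only point requiring a moment's care is that density plays no role in the matroid reformulation of preprojectivity, so it must simply be transported through the equivalence rather than re-derived, and one should note that \ref{sub:matroid_preprojectivity_criterion} and hence also \ref{sub:projectivity_criterion} are the only earlier results invoked.
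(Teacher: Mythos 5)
Your proposal is correct and takes essentially the same route the paper intends: the paper states this corollary without any proof, treating it as immediate from the definition of a projective join space (preprojective and dense) together with Corollary \ref{sub:matroid_preprojectivity_criterion}, which is exactly the chain of equivalences you record. Carrying the density clause along unchanged while rewriting the preprojectivity clause is all that is needed, so there is no gap.
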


\section{Conclusion}

In \ref{sub:correspondence_between_set_represented_line_spaces_and_equivalence_relational_join_spaces}
(\ref{enu:correspondence_between_set_represented_line_spaces_and_equivalence_relational_join_spaces_4})
(correspondence between set-represented line spaces and equivalence-relational
join spaces) provides a natural one-to-one correspondence between
projective spaces, defined by axioms A1, A2, A3, E in \cite[§1]{pieri_1899};
and projective join spaces, defined by Postulati VI; VII, VIII, IX,
X, XII in \cite[§1]{pieri_1899}. \ref{sub:join_equivalence_relationality_criterion}
(join-equivalence-relationality criterion) and \ref{sub:projectivity_criterion}
(projectivity criterion) amount to replacing the projective geometry
axiom Postulato XII in \cite[§1]{pieri_1899} by Assioma XIII in \cite[§10]{peano_1889},
where it is part of an axiom system for order geometry. Thus, projective
geometry and order geometry have a broad common axiomatic basis. As
a corollaries, \ref{sub:matroid_preprojectivity_criterion} (matroid
preprojectivity criterion and \ref{sub:matroid_projectivity_criterion}
(matroid projectivity criterion) show how the concepts of a preprojective
join space and of a projective join space can be derived from the
concept of a matroid. The defining properties of an equivalence relation
have been used as a conceptual red thread, in analogy to \cite[chapter 3]{retter_2013}
and \cite{retter_2014}, where the defining properties of a partial
order have been used as a conceptual red thread.

\end{document}